\newtheorem{theorem}{Theorem}
\newtheorem{notation}{Notation}
\newtheorem{corollary}[theorem]{Corollary}
\newtheorem{lemma}[theorem]{Lemma}
\newtheorem{proposition}[theorem]{Proposition}
\newtheorem{example}{Example}
\newtheorem{remark}[theorem]{Remark}
\numberwithin{theorem}{section}
\numberwithin{equation}{section}
\newcommand{\norm}[1]{\left\Vert#1\right\Vert}
\newcommand{\la}{\langle}
\newcommand{\ra}{\rangle}
\newcommand{\Comp}{\mathbb{C}}
\newcommand{\F}{\mathcal{F}}
\newcommand{\g}{\mathbb{G}}
\newcommand{\n}{\mathbb{N}}
\newcommand{\tor}{\mathbb{T}}
\newcommand{\z}{\mathbb{Z}}
\global\long\def\tp{\mathop{\xymatrix{*+<.7ex>[o][F-]{\scriptstyle \top}}
 } }
\begin{document}
\title{Sobolev embedding properties on compact matrix quantum groups of Kac type}

\author{Sang-Gyun Youn}

\address{Sang-Gyun Youn : Department of Mathematics and Statistics, Queen's University, Jeffery Hall, Kingston, Ontario, K7L 3N6 Canada}
\email{s.youn@queensu.ca}

\keywords{Compact matrix quantum group, non-commutative $L^p$-space, Sobolev embedding property, ultracontractivity}
\thanks{2010 \it{Mathematics Subject Classification}.
\rm{Primary 43A15,46L52, Secondary 20G42, 81R15}.\\ This research is supported by the Natural Sciences and Engineering Research Council of Canada.}

\begin{abstract}
We establish sharp Sobolev embedding properties within a broad class of compact matrix quantum groups of Kac type under the polynomial growth or the rapid decay property of their duals. Main examples are duals of polynomially growing discrete quantum groups, duals of free groups and free quantum groups $O_N^+,S_N^+$. In addition, we generalize sharpend Hausdorff-Young inequalities, compute degrees of the rapid decay property for $\widehat{O_N^+},\widehat{S_N^+}$ and prove sharpness of Hardy-Littlewood inequalities on duals of free groups.
\end{abstract}

\maketitle

\section{Introduction}

It is a long tradition to study Fourier multipliers in harmonic analysis, and $L^p-L^q$ multipliers have played major roles in the theory of partial differential equations. A representative example is the Sobolev embedding property. More precisely, the Hardy-Littlewood-Sobolev theorem on tori states that
\begin{equation}
\norm{(1-\Delta)^{-\frac{d}{2}(\frac{1}{p}-\frac{1}{q})}(f)}_{L^q(\tor^d)}\lesssim \norm{f}_{L^p(\tor^d)}
\end{equation}
for all $1<p<q<\infty$ and $f\in L^p(\tor^d)$, where $\Delta$ is the Laplacian operator.

Sobolev embedding properties have been explored in a broad class including Lie groups \cite{Fo75,FiRu17,Va88,BrPeTaVa18}, and more generally, $L^p-L^q$ multipliers on Lie groups have been extensively studied \cite{CoGiMe93,AkRu15,AkNuRu15,AkNuRu16}. In particular, for connected compact Lie groups $G$, it is known that for any $1<p\leq 2$

\begin{equation}\label{Sobolev-Lie}
\left (\sum_{\pi\in \mathrm{Irr}(G)}\frac{n_{\pi}}{(1+\kappa_{\pi})^{n(\frac{1}{p}- \frac{1}{2} )}}\norm{\widehat{f}(\pi)}_{HS}^2 \right )^{\frac{1}{2}} = \norm{(1-\Delta)^{-\frac{n}{2}(\frac{1}{p}-\frac{1}{2})}(f)}_{L^2(G)}\lesssim \norm{f}_{L^p(G)},
\end{equation} 
where $n$ is the real dimension of $G$, $\Delta:\pi_{i,j}\mapsto -\kappa_{\pi}\pi_{i,j}$ is the Laplacian operator and $\norm{A}_{HS}=\mathrm{tr}(A^*A)^{\frac{1}{2}}$. Since the natural length function $|\cdot |$ on $\mathrm{Irr}(G)$ satisfies $|\pi|\approx \kappa_{\pi}^{\frac{1}{2}}$ \cite[Lemma 5.6.6]{Wa73}, the above (\ref{Sobolev-Lie}) is equivalent to
\begin{equation}\label{Sobolev-Lie2}
\left (\sum_{\pi\in \mathrm{Irr}(G)}\frac{n_{\pi}}{(1+|\pi|)^{n(\frac{2}{p}-1)}}\norm{\widehat{f}(\pi)}_{HS}^2 \right )^{\frac{1}{2}} \lesssim \norm{f}_{L^p(G)}.
\end{equation}

Note that (\ref{Sobolev-Lie2}) detects the real dimension of $G$, which is an important geometric quantity. The main purpose of this study is to generalize (\ref{Sobolev-Lie2}) to the framework of {\it compact quantum groups} by employing geometric information of the underlying quantum group such as growth rates and the rapid decay property. Indeed, for highly important examples, we will show that the polynomial growth order or the degree of rapid decay property replaces the role of the real dimension $n$.

The theory needed to explore Sobolev embedding properties for compact quantum groups is so-called {\it non-commutative $L^p$-analysis}. On quantum groups and quantum tori, $L^p-L^q$ multipliers have been studied from various perspectives \cite{JuPaPaPe17,GoJuPa17,FrHoLeUlZh17,AkMaRu18,XiXuYi18}. In particular, due to \cite[Theorem4.3]{AkMaRu18} which generalizes a theorem of H$\ddot{\mathrm{o}}$rmander, if $\g$ is a compact matrix quantum group of Kac type whose dual $\widehat{\g}$ has a polynomial growth 
\begin{equation}
 b_k=\sum_{\alpha\in \mathrm{Irr}(\g): |\alpha |\leq k}n_{\alpha}^2\leq C(1+k)^{\gamma}
\end{equation}
for some $C,\gamma>0$, then for any $1<p\leq 2$ we have
\begin{equation}\label{ineq-poly}
\left (\sum_{\alpha\in \mathrm{Irr}(\g)}\frac{n_{\alpha}}{(1+|\alpha |)^{ \gamma (\frac{2}{p}-1)}}\norm{\widehat{f}(\alpha)}_{HS}^2 \right )^{\frac{1}{2}}\lesssim \norm{f}_{L^p(\g)},
\end{equation}
where $|\cdot|$ is the natural length function on $\mathrm{Irr}(\g)$. To our best knowledge, if we exclude compact Lie groups and duals of polynomially growing discrete groups, it is not known whether the above inequalities (\ref{ineq-poly}) are sharp. However, we will show that (\ref{ineq-poly}) is sharp if one of the following natural assumptions holds:
\begin{enumerate}
\item (Corollary \ref{cor-poly-sharp}) if $b_k\approx (1+k)^{\gamma}$ and there exists a standard noncommutative semigroup $(T_t)_{t>0}$ on $L^{\infty}(\g)$ whose infinitesimal generator $L$ satisfies
\begin{equation}
L(u^{\alpha}_{i,j})=-l(\alpha)u^{\alpha}_{i,j}~\mathrm{with~}l(\alpha)\sim |\alpha|.
\end{equation}
\item (Corollary \ref{cor-poly}) if $b_k\lesssim (1+k)^{\gamma}$ and $\displaystyle s_k=\sum_{\alpha: |\alpha|=k} n_{\alpha}^2 \gtrsim (1+k)^{\gamma - 1}$.
\end{enumerate}

The above results establish sharp Sobolev embedding properties for connected compact Lie groups, duals of polynomially growing discrete groups, $O_2^+$ and $S_4^+$.

Despite the above strong conclusion under the polynomial growth of $\widehat{\g}$, it is important to focus on duals of free groups $\widehat{\mathbb{F}_{N}}$ and free quantum groups $O_N^+, S_N^+$ whose duals are exponentially growing. Arguably, these are the most important examples of compact quantum groups in view of operator algebras \cite{Wa95,VaWa96,Wa98,Vo11,Br12,VeVo13,Is15a,FrVe16,Br17b,Is17,BrVe18}, and non-commutative $L^p$-analysis on $\widehat{\mathbb{F}_N},O_N^+,S_N^+$ has been extensively studied \cite{JuMePa14b,MeRi17,MeDe17,JuPaPaPe17,Wa17,FrHoLeUlZh17,Yo18a,Yo18b}. From this viewpoint, one of the main aims of this paper is to establish the analogues of (\ref{ineq-poly}) for $\widehat{\mathbb{F}_N},O_N^+,S_N^+$ sharply.

To settle this, we will take two strategies. First of all, we extend \cite[Theorem 3.2]{Yo18b} to genral compact matrix quantum groups whose duals have the rapid decay property (Theorem \ref{thm-rapid}). We call it {\it sharpened Hausdorff-Young inequalities} and explain why such a phenomenon does not appear in the category of compact Lie groups, duals of discrete groups, $O_2^+$ and $SU_q(2)$ (Section \ref{sec-HY}). Then, by applying the complex interpolation method between the sharpened Hausdorff-Young inequalities (Theorem \ref{thm-rapid}) and Hardy-Littlewood inequalities \cite[Theorem 3.8]{Yo18a}, for any $1<p\leq 2$ we obtain
\begin{equation}\label{ineq-exp}
\left ( \sum_{\alpha\in \mathrm{Irr}(\g)}\frac{1}{(1+|\alpha|)^{3(\frac{2}{p}-1)}}n_{\alpha}\norm{\widehat{f}(\alpha)}_{HS}^2 \right )^{\frac{1}{2}}\lesssim \norm{f}_{L^p(\g)}
\end{equation}
under the assumption that $\g$ is one of $\widehat{\mathbb{F}_N}$, $O_{N+1}^+$ and $S_{N+3}^+$ with $N\geq 2$.

Secondly, the problem to check whether the exponent $3$ in (\ref{ineq-exp}) is optimal amounts to ultracontractivity problems of certain semigroups associated with free groups or free quantum groups due to \cite[Theorem 1.1]{Xi17}. More precisely, for the Poisson or heat semigroup $(T_t)_{t>0}$ of $\g=\widehat{\mathbb{F}_N},O_{N+1}^+$ or $S_{N+3}^+$, we will prove that there exists a universal constant $K>0$ such that
\begin{equation}
\norm{e^{-t}T_t(f)}_{L^{\infty}(\g)}\leq \frac{K \norm{f}_{L^2(\g)}}{t^{\frac{d}{2}}}~\mathrm{for~all~}f\in L^1(\g)~\mathrm{and~}t>0
\end{equation}
if and only if $d\geq 3$ (Corollary \ref{cor-ultra-sharp}). This confirms that (\ref{ineq-exp}) is sharp for $\widehat{\mathbb{F}_N}$, $O_{N+1}^+$ and $S_{N+3}^+$ with $N\geq 2$.

Lastly, we note that this study is applicable
\begin{enumerate}
\item (Corollary \ref{cor-rd}) to calculate the rapid decay degrees of $\widehat{O_N^+}$ and $\widehat{S_{N}^+}$ and 
\item (Corollary \ref{cor-application}) to prove sharpness of Hardy-Littlewood inequalities on $\widehat{\mathbb{F}_N} $ presented in \cite[Theorem 4.4]{Yo18a}. 
\end{enumerate}

\section{Preliminaries}\label{sec-Pre}

\subsection{Compact quantum groups and the representation theory}

A {\it compact quantum group} $\g$ is a pair $(C(\g),\Delta)$ where $C(\g)$ is a unital $C^*$-algebra and $\Delta:C(\g)\rightarrow C(\g)\otimes_{\mathrm{min}}C(\g)$ is a unital $*$-homomorphism such that 
\begin{enumerate}
\item $(\Delta\otimes \mathrm{id})\circ \Delta= (\mathrm{id}\otimes \Delta)\circ \Delta$.
\item $\mathrm{span}\left \{\Delta(a)(b\otimes 1):a,b\in C(\g)\right\}$ and $\mathrm{span}\left \{\Delta(a)(1\otimes b):a,b\in C(\g)\right\}$ are dense in $C(\g)\otimes_{\mathrm{min}}C(\g)$.
\end{enumerate}

For a compact quantum group $\g$, there exists a unique state $h$ satisfying 
\begin{equation}
(\mathrm{id}\otimes h)\circ \Delta = h(\cdot )1 =(h\otimes \mathrm{id})\circ \Delta.
\end{equation}
We call $h$ the {\it Haar state} and $\g$ is said to be of {\it Kac type} if $h$ is tracial.

A (finite dimensional) {\it unitary representation} of $\g$ is $u=(u_{i,j})_{i,j=1}^{n_u}\in M_{n_u}\otimes C(\g)$ such that
\begin{equation}
u^*u=\mathrm{Id}_{n_u}\otimes 1= uu^*~\mathrm{and}~\Delta(u_{i,j})=\sum_{k=1}^{n_u}u_{i,k}\otimes u_{k,j}~\mathrm{for~all~}1\leq i,j\leq n_u.
\end{equation}

We say that a unitary representation $u$ is {\it irreducible} if 
\begin{equation}
\left \{T\in M_{n_{u}}:(T\otimes 1)u=u(T\otimes 1)\right\}=\Comp \cdot \mathrm{Id}_{n_u}
\end{equation}
and denote by $\mathrm{Irr}(\g)\cong \left \{u^{\alpha}=(u^{\alpha}_{i,j})_{1\leq i,j\leq n_{\alpha}}\right\}_{\alpha\in \mathrm{Irr}(\g)}$ the set of irreducible unitary representations up to unitary equivalence. Then the space of polynomials 
\begin{equation}
\mathrm{Pol}(\g)=\mathrm{span}\left \{u^{\alpha}_{i,j}:\alpha\in \mathrm{Irr}(\g)~\mathrm{and}1\leq i,j\leq n_{\alpha}\right\}
\end{equation}
is a dense $*$-subalgebra of $C(\g)$ and the Haar state $h$ is faithful on $\mathrm{Pol}(\g)$.

Associated to a compact quantum group $\g$ is the discrete dual quantum group $\widehat{\g}=(\ell^{\infty}(\widehat{\g}),\widehat{\Delta},\widehat{h})$. Among the structures of $\widehat{\g}$, the underlying von Neumann algebra $\ell^{\infty}(\widehat{\g})$ is $\ell^{\infty}-\oplus_{\alpha\in \mathrm{Irr}(\g)}M_{n_{\alpha}}$ and, if $\g$ is of Kac type, $\widehat{h}$ is a normal semifinite faithful tracial weight on $\ell^{\infty}(\widehat{\g})$ given by
\begin{equation}
\widehat{h}(A)=\sum_{\alpha\in \mathrm{Irr}(\g)}n_{\alpha}\mathrm{tr}(A_{\alpha})~\mathrm{for~all~}A=(A_{\alpha})_{\alpha\in \mathrm{Irr}(\g)}\in \ell^{\infty}(\widehat{\g})_+.
\end{equation}

See \cite{Wo87a,Wo87b,KuVa00,KuVa03,Ti08} for more details of locally compact quantum groups.

\subsection{Non-commutative $L^p$-spaces}

Throughout this paper, we assume that $\g$ is a compact quantum group of Kac type. Since $h$ is faithful on $\mathrm{Pol}(\g)$, the space $\mathrm{Pol}(\g)$ is canonically embedded into $B(L^2(\g))$ where $L^2(\g)$ is the completion of $\mathrm{Pol}(\g)$ with respect to the inner product $\la f,g\ra_{L^2(\g)}=h(g^*f)$ for all $f,g\in \mathrm{Pol}(\g)$.

We define an associated von Neumann algebra $L^{\infty}(\g)=\mathrm{Pol}(\g)''$ in $B(L^2(\g))$, and then the Haar state $h$ extends to a normal faithful tracial state $h$ on $L^{\infty}(\g)$.

For any $1\leq p<\infty$, the {\it non-commutative $L^p$-space} is defined as the completion of $\mathrm{Pol}(\g)$ with respect to the norm structure $\norm{f}_{L^p(\g)}=h(|f|^p)^{\frac{1}{p}}$ for any $f\in \mathrm{Pol}(\g)$. Then it is well known that $(L^{\infty}(\g),L^1(\g))_{\frac{1}{p}}=L^p(\g)$ where $(\cdot,\cdot)_{\theta}$ is the complex interpolation space and that $L^{p'}(\g)=L^p(\g)^*$ for any $1\leq p<\infty$ under the dual bracket $\la f,g\ra_{L^p(\g),L^{p'}(\g)}=h(gf)$ for all $f,g\in \mathrm{Pol}(\g)$.

On the dual side, for any $1\leq p <\infty$, the non-commutative $\ell^p$-space is defined as 
\begin{equation}
\ell^p(\widehat{\g})=\left \{A\in \ell^{\infty}(\widehat{\g}):\sum_{\alpha\in \mathrm{Irr}(\g)}n_{\alpha}\mathrm{tr}(|A_{\alpha}|^p)<\infty \right\}
\end{equation}
and the natural norm structure is $\norm{A}_{\ell^p(\widehat{\g})}\displaystyle =\left (\sum_{\alpha\in \mathrm{Irr}(\g)}n_{\alpha}\mathrm{tr}(|A_{\alpha}|^p)\right )^{\frac{1}{p}}$ for all $A \in \ell^p(\widehat{\g})$. Then $(\ell^{\infty}(\widehat{\g}),\ell^1(\widehat{\g}))_{\frac{1}{p}}=\ell^p(\g)$ and $\ell^{p'}(\widehat{\g})=\ell^p(\widehat{\g})^*$ hold for any $1\leq p<\infty$. The dual bracket between $\ell^p(\widehat{\g})$ and $\ell^{p'}(\widehat{\g})$ is given by
\begin{equation}
\la A,B\ra_{\ell^p(\widehat{\g}),\ell^{p'}(\widehat{\g})}=\sum_{\alpha\in \mathrm{Irr}(\g)}n_{\alpha}\mathrm{tr}(B_{\alpha}A_{\alpha}).
\end{equation}

For the general theory of non-commutative $L^p$-spaces, see \cite{PiXu03,Pi03}.

\subsection{Compact matrix quantum groups and the rapid decay property}

The {\it tensor product representation} of two unitary representations $u$ and $v$ is 
\begin{equation}
u \tp v =(u_{i,j}v_{k,l})_{1\leq i,j\leq n_u, 1\leq k,l\leq n_v}\in M_{n_u}\otimes M_{n_v}\otimes C(\g)
\end{equation}
and every unitary representation is decomposed into a direct sum of irreducible unitary representations. If $\sigma$ is an irreducible component of $u\tp v$, we write $\sigma\subseteq u\tp v$.

A {\it compact matrix quantum group} is a compact quantum group $\g$ for which there exists a unitary representation $w$ such that every $u^{\alpha}\in \mathrm{Irr}(\g)$ is an irreducible component of $w^{\tiny \tp n}$ for some $n\in \left \{0\right\}\cup \n$. In this case, we can define a natural length function on $\mathrm{Irr}(\g)$ by 
\begin{equation}
|\alpha|=\min \left \{n\in \left \{0\right\}\cup \n : u^{\alpha}\subseteq w^{\tiny \tp n}\right\}~\mathrm{for~all~}\alpha\in \mathrm{Irr}(\g).
\end{equation}

Throughout this paper, we will use the following notations frequently.

\begin{notation}
Let $\g$ be a compact matrix quantum group and $k\in \left \{ 0\right\}\cup \n$.
\begin{enumerate}
\item $k$-balls are defined as $B_k=\left \{\alpha\in \mathrm{Irr}(\g):|\alpha|\leq k\right\}$ and $b_k$ is defined by $\displaystyle \sum_{\alpha\in B_k}n_{\alpha}^2$.
\item $k$-spheres $S_k$ are defined as $\left \{\alpha\in \mathrm{Irr}(\g):|\alpha|= k\right\}$ and $s_k$ is defined by $\displaystyle \sum_{\alpha\in S_k}n_{\alpha}^2$.
\item For each $\alpha\in \mathrm{Irr}(\g)$, we define $H_{\alpha}=\mathrm{span}\left \{ u^{\alpha}_{i,j}:1\leq i,j\leq n_{\alpha} \right\}$ and denote by $p_{\alpha}$ the orthogonal projection from $L^2(\g)$ onto $H_{\alpha}$.
\item We define $H_k=\oplus_{\alpha\in S_k} H_{\alpha}=\mathrm{span}\left \{u^{\alpha}_{i,j}:|\alpha|=k,1\leq i,j\leq n_{\alpha}\right\}$ and $p_{k}$ the orthogonal projection from $L^2(\g)$ onto $H_{k}$.
\end{enumerate}
\end{notation}

We say that $\widehat{\g}$ has a {\it polynomial growth} if there exists $C,\gamma>0$ such that $b_k\leq C(1+k)^{\gamma}$ \cite{BaVe09} and norm structures of $H_{\alpha}$ and $H_k$ are inherited from $L^2(\g)$.

In the sense of \cite{Ve07}, for a compact matrix quantum group $\g$, we say that its discrete dual $\widehat{\g}$ has the {\it rapid decay property} if there exists $C,\beta>0$ such that 
\begin{equation}\label{ineq-rapid}
\norm{f}_{L^{\infty}(\g)}\leq C(1+k)^{\beta}\norm{f}_{L^2(\g)}~\mathrm{for~all~}f\in H_k.
\end{equation}

\begin{notation}
We say that the discrete dual $\widehat{\g}$ of a compact matrix quantum group $\g$ has the rapid decay property with $r_k\lesssim  (1+k)^{\beta}$ if the above inequality (\ref{ineq-rapid}) holds.
\end{notation}

\subsection{Fourier analysis on compact quantum groups}

Within the framework of compact quantum groups, the Fourier transform $\mathcal{F}:L^1(\g)\rightarrow \ell^{\infty}(\widehat{\g})$, $\phi \mapsto \widehat{\phi}=(\widehat{\phi}(\alpha))_{\alpha\in \mathrm{Irr}(\g)}$, is defined by
\begin{equation}
\widehat{\phi}(\alpha)_{i,j}=\phi((u^{\alpha}_{j,i})^*)~\mathrm{for~all~}1\leq i,j\leq n_{\alpha}
\end{equation}
under the identification $L^1(\g)=L^{\infty}(\g)_*$. 

If $\g$ is of Kac type, we call $\displaystyle \sum_{\alpha\in \mathrm{Irr}(\g)}n_{\alpha}\mathrm{tr}(\widehat{\phi}(\alpha)u^{\alpha})=\sum_{\alpha\in \mathrm{Irr}(\g)}\sum_{i,j=1}^{n_{\alpha}}n_{\alpha}\widehat{\phi}(\alpha)_{i,j}u^{\alpha}_{j,i}$ the Fourier series of $\phi\in L^1(\g)$ and denote it by $\displaystyle \phi\sim \sum_{\alpha\in \mathrm{Irr}(\g)}n_{\alpha}\mathrm{tr}(\widehat{\phi}(\alpha)u^{\alpha})$. Indeed, equality $f=\displaystyle \sum_{\alpha\in \mathrm{Irr}(\g)}n_{\alpha}\mathrm{tr}(\widehat{f}(\alpha)u^{\alpha})$ holds for all $f\in \mathrm{Pol}(\g)$ since $\widehat{f}(\alpha)=0$ for all but finitely many $\alpha$.

It is known that the Fourier transform $\mathcal{F}:L^1(\g)\rightarrow \ell^{\infty}(\widehat{\g})$ is contractive, and the Plancherl theorem states that 
\begin{equation}
\norm{f}_{L^2(\g)}=\left ( \sum_{\alpha\in \mathrm{Irr}(\g)}n_{\alpha}\mathrm{tr}(\widehat{f}(\alpha)^*\widehat{f}(\alpha)) \right )^{\frac{1}{2}}~\mathrm{for~all~}f\in L^2(\g).
\end{equation}

Therefore, by the complex interpolation theorem, we obtain the Hausdorff-Young inequalities
\begin{equation}
\norm{\widehat{f}}_{\ell^{p'}(\widehat{\g})}\leq \norm{f}_{L^p(\g)}~\mathrm{for~all~}f\in L^p(\g)~\mathrm{and~}1\leq p\leq 2.
\end{equation}

\subsection{Complex interpolation on vector valued $\ell^p$-spaces}

In this section, we gather some well-known facts on complex interpolation methods extracted from \cite[Section 1]{Xu96}. For a family of Banach spaces $\left \{E_k\right\}_{k\in \z}$ with a positive measure $\mu$ on $\z$ we define vector valued $\ell^p$-spaces by
\begin{equation}
\ell^p(\left \{E_k\right\}_{k\in \z},\mu)=\left \{(x_k)_{k\in \z}:x_k\in E_k~\mathrm{for~all~}k\in \z~\mathrm{and~}\left (\norm{x_k}_{E_k}\right )_{k\in \z} \in \ell^p(\z, \mu)\right\}
\end{equation}
and the natural norm structure is 
\begin{equation}
\norm{(x_k)_{k\in \z}}_{\ell^p(\left \{E_k\right\}_{k\in \z},\mu)}= \left \{ \begin{array}{cc}\left ( \sum_{k\in \z}\norm{x_k}_{E_k}^p \mu(k) \right )^{\frac{1}{p}}&~\mathrm{if~}1\leq p<\infty\\ \sup_{k\in \z}\left \{ \norm{x_k}_{E_k}\right \}&~\mathrm{if~}p=\infty \end{array} \right .  .
\end{equation}

If $(E_k,F_k)$ is a compatible pair of Banach spaces for all $k\in \z$ and $\mu_0,\mu_1$ are two positive measures on $\z$, then for any $0<\theta<1$ we have
\begin{equation}\label{CI1}
\left (\ell^{p_0}(\left \{E_k\right\}_{k\in \z},\mu_0),\ell^{p_1}(\left \{F_k\right\}_{k\in \z},\mu_1) \right )_{\theta}= \ell^{p}(\left \{(E_k,F_k)_{\theta}\right\}_{k\in \z},\mu)
\end{equation}
with equal norm, where $\displaystyle \frac{1-\theta}{p_0}+\frac{\theta}{p_1}=\frac{1}{p}$ and $\mu=\displaystyle \mu_0^{\frac{p(1-\theta)}{p_0}}\mu_1^{\frac{p \theta}{p_1}}$. In particular, for $p_0=2=p_1$ and any $0<\theta<1$, we have
\begin{equation}\label{CI2}
\left (\ell^{2}(\left \{E_k\right\}_{k\in \z},\mu_0),\ell^{2}(\left \{E_k\right\}_{k\in \z},\mu_1) \right )_{\theta}= \ell^{2}(\left \{E_k\right\}_{k\in \z},\mu_0^{1-\theta}\mu_1^{\theta}).
\end{equation}

\subsection{Examples of compact matrix quantum groups}

\subsubsection{Duals of discrete groups}

Let $\Gamma$ be a discrete group and $C_r^*(\Gamma)$ be the associated reduced group $C^*$-algebra generated by left translation operators $\left \{\lambda_g\right\}_{g\in \Gamma}$. The unital $*$-homomorphism $\Delta:C_r^*(\g)\rightarrow C_r^*(\Gamma)\otimes_{\mathrm{min}}C_r^*(\Gamma),\lambda_g\mapsto \lambda_g\otimes \lambda_g$, determines a compact quantum group  $\widehat{\Gamma}=(C_r^*(\Gamma),\Delta)$ which we call the {\it dual of the discrete group} $\Gamma$. Then $\mathrm{Irr}(\widehat{\Gamma})=\left \{\lambda_g\right\}_{g\in \Gamma}$ and the Haar state is the vacuum state determined by $h:\lambda_g\mapsto \delta_{g,e}$. The associated von Neumann algebra $L^{\infty}(\widehat{\Gamma})$ is the group von Neumann algebra $VN(\Gamma)$ and $L^1(\widehat{\Gamma})=A(\Gamma)$ is called the Fourier algebra of $\Gamma$.

Moreover, if $S=\left \{g_j\right\}_{j=1}^n\subset \Gamma$ is a generating set, then $\displaystyle w= \sum_{j=1}^n e_{j,j}\otimes \lambda_{g_j}\in M_n\otimes C_r^*(\Gamma)$  makes $\widehat{\Gamma}$ into a compact matrix quantum group and
$\Gamma$ is said to be of polynomial growth if $b_k\lesssim (1+k)^{\gamma}$ for some $\gamma$ with respect to $w$. Moreover, if $\Gamma$ has a polynomial growth, there exists a non-negative integer $\gamma\in \left \{0\right\}\cup \n$ such that $b_k\approx (1+k)^{\gamma}$. We call the non-negative integer $\gamma$ the polynomial growth order of $\Gamma$.

If $\Gamma$ is a non-elementary hyperbolic group, it is known that $\Gamma~(=\widehat{\widehat{\Gamma}})$ has the rapid decay property with $r_k\lesssim 1+k$ \cite{Ha78,Ha88}.

\subsubsection{Free orthogonal quantum groups and free permutation quantum groups}

Let $N\geq 2$ and $C(O_N^+)$ be the universal unital $C^*$-algebra generated by $\left \{u_{i,j}\right\}_{i,j=1}^N$ satisfying
\begin{equation}
u_{i,j}^*=u_{i,j}~\mathrm{and~}\sum_{k=1}^N u_{k,i}u_{k,j}=\delta_{i,j}=\sum_{k=1}^N u_{i,k}u_{j,k}~\mathrm{for~all~}1\leq i,j\leq N.
\end{equation}
Then $\displaystyle \Delta:u_{i,j}\mapsto \displaystyle \sum_{k=1}^N u_{i,k}\otimes u_{k,j}$ extends to a unital $*$-homomorphism and $O_N^+=(C(O_N^+),\Delta)$ with $u=(u_{i,j})_{i,j=1}^N$ satisfies the axioms to be a compact matrix quantum group. We call $O_N^+$ {\it free orthogonal quantum groups} \cite{Wa95}. 

On the other hand, we denote by $C(S_N^+)$ the universal unital $C^*$-algebra generated by $\left \{u_{i,j}\right\}_{i,j=1}^N$ satisfying
\begin{equation}
u_{i,j}^2=u_{i,j}=u_{i,j}^*~\mathrm{and}\displaystyle \sum_{k=1}^N u_{i,k}=1=\sum_{k=1}^N u_{k,j}~\mathrm{for~all~}1\leq i,j\leq N.
\end{equation}
Then $\Delta:\displaystyle  u_{i,j}\mapsto \sum_{k=1}^N u_{i,k}\otimes u_{k,j}$ again extends to a unital $*$-homomorphism and $(C(S_N^+),\Delta)$ with $u=(u_{i,j})_{i,j=1}^N$  forms a compact matrix quantum group, which we call {\it free permutation quantum groups} $S_N^+$ \cite{Wa98}.

Let $\g$ be either $O_N^+$ or $S_{N+2}^+$. Then $\mathrm{Irr}(\g)=\left \{0\right\}\cup \n$, $\left \{ \begin{array}{cc}n_k\approx (\frac{N+\sqrt{N^2-4}}{2})^k\\ r_k\lesssim 1+k  \end{array} \right .$ if $N\geq 3$ and $\left \{ \begin{array}{cc} s_k= (1+k)^2 \\ b_k \approx (1+k)^3  \end{array} \right .$ if $N= 2$ \cite{Ve07,BaVe09}.

\subsection{Standard noncommutative semigroup and examples}

Let $\g$ be a compact quantum group of Kac type. We say that a semigroup $(T_t)_{t>0}$ is a {\it standard noncommutative semigroup} on $L^{\infty}(\g)$ if $(T_t)_{t>0}$ satisfies the following assumptions \cite{JuMe12,JiWa17}:
\begin{enumerate}
\item Every $T_t$ is a normal unital completely positive maps on $L^{\infty}(\g)$;
\item For any $t>0$ and $f,g\in L^{\infty}(\g)$, $h(T_t(f)g)=h(fT_t(g))$;
\item For any $f\in L^{\infty}(\g)$ $\displaystyle \lim_{t\rightarrow 0^+} T_t(f)=f$ in the strong operator topology.
\end{enumerate}

Note that the first two conditions imply $h(T_t(f))=h(f)$ for any $f\in L^{\infty}(\g)$ and that such a semigroup admits an infinitesimal generator $L$ such that $T_t=e^{tL}$.

\begin{example}\label{ex-1}
\begin{enumerate}
\item For any connected compact Lie groups, there exists the Poisson semigroup $(T_t)_{t>0}=e^{-t(-\Delta)^{\frac{1}{2}}}$ on $L^{\infty}(G)$ where $\Delta$ is the Laplacian operator \cite{St70}.
\item For duals of non-abelian free groups, there exists the {\it Poisson semigroup} $(T_t^F)_{t>0}$ on $VN(\mathbb{F}_N)$ such that $ \lambda_g \mapsto e^{-t|g|}\lambda_g$ \cite{Ha78,JuMePa14,MeDe17}.
\item For free orthogonal (resp. permutation) quantum groups $O_N^+$ (resp. $S_{N+2}^+$) with $N\geq 2$, there exists the heat semigroup $(T_t^O)_{t>0}$ (resp. $T_t^S$) on $L^{\infty}(O_N^+)$ (resp. $L^{\infty}(S_{N+2}^+)$) such that $u^k_{i,j}\mapsto e^{-t c_k} u^k_{i,j}$ with $c_k\sim \left \{ \begin{array}{cc}k&~\mathrm{if~}N\geq 3\\ k^2&\mathrm{~if~}N=2\end{array} \right .$ \cite[Lemma 1.7, Lemma 1.8 and Subsection 2.2]{FrHoLeUlZh17}.

\end{enumerate}
\end{example}

\section{Sharpness of Sobolev embedding properties for polynomially growing discrete quantum groups}\label{sec-poly}

Recall that, if $\g$ is a compact matrix quantum group of Kac type whose dual has the polynomial growth $b_k\lesssim (1+k)^{\gamma}$, then for any $1<p\leq 2$ we have
\begin{equation}\label{ineq-poly2}
\left (\sum_{\alpha\in \mathrm{Irr}(\g)}\frac{n_{\alpha}}{(1+|\alpha |)^{ \gamma (\frac{2}{p}-1)}}\norm{\widehat{f}(\alpha)}_{HS}^2 \right )^{\frac{1}{2}}\lesssim \norm{f}_{L^p(\g)}~\mathrm{for~all~}f\in L^p(\g).
\end{equation}

In this section, we will provide two sufficient conditions to prove sharpness of (\ref{ineq-poly2}). One strategy requires the existence of a standard noncommutative semigroup $(T_t)_{t>0}$ whose infinitesimal generator $L$ satisfies
\begin{equation}
L(u^{\alpha}_{i,j})=-l(\alpha)u^{\alpha}_{i,j}~\mathrm{with~}l(\alpha)\sim |\alpha|.
\end{equation}
and the other one depends on lower bounds of the growth of $k$-sphere $s_k$. These two strategies explain how we are able to obtain sharp Sobolev embedding properties for connected compact Lie groups and duals of polynomially growing discrete groups as already noted in \cite{Yo18a,Yo18c}. Furthermore, these methods also apply to the free orthogonal quantum group $O_2^+$ and the free permutation quantum group $S_4^+$.

\subsection{An approach using semigroups}

The purpose of this section is to extend some important techniques of \cite[Section 6]{Yo18a} to general compact matrix quantum groups of Kac type. Discussions in this section depend on the existence of a standard noncommutative semigroup whose infinitesimal generator behaves like the Poisson semigroup of connected compact Lie groups $G$.

Throughout this Section, let us suppose that there exists a standard semigroup $(T_t)_{t>0}$ whose infinitesimal generator $L$ satisfies
\begin{equation}
L(u^{\alpha}_{i,j})=-l(\alpha)u^{\alpha}_{i,j}~\mathrm{and~}l(\alpha)\sim |\alpha|~\mathrm{for~all~}\alpha\in \mathrm{Irr}(\g).
\end{equation}

Indeed, if $(P_t)_{t>0}$ is the Poisson semigroup of a connected compact Lie group $G$, then the associated infinitesimal generator satisfies
\begin{equation}
L(u^{\pi}_{i,j})=-\kappa_{\pi}^{\frac{1}{2}}u^{\pi}_{i,j}~\mathrm{and~}\kappa_{\pi}^{\frac{1}{2}}\sim |\pi|~\mathrm{for~all~}\pi\in \mathrm{Irr}(G).
\end{equation}

The following is a part of \cite[Theorem 1.1]{Xi17}, which is written in accordance with our notation.
\begin{theorem}[Theorem 1.1, \cite{Xi17}]\label{thm-Xiao}
Let $\g$ be a compact quantum group of Kac type and $(T_t)_{t>0}$ be a standard semigroup on $L^{\infty}(\g)$ with the infinitesimal generator $L$. Then, for the semigroup $(S_t)_{t>0}=(e^{-t}T_t)_{t>0}$ and $s>0$, the following are equivalent:
\begin{enumerate}
\item For any $1\leq p<q\leq \infty$, 
\[\norm{S_t(x)}_{L^{q}(\g)}\lesssim \frac{\norm{x}_{L^p(\g)}}{t^{s(\frac{1}{p}-\frac{1}{q})}}\mathrm{~ for~ all~} x\in L^{p}(\g)\mathrm{~ and~} t>0.\]
\item There exists $1\leq p<q\leq \infty$ such that
\[\norm{S_t(x)}_{L^{q}(\g)}\lesssim \frac{\norm{x}_{L^p(\g)}}{t^{s(\frac{1}{p}-\frac{1}{q})}}\mathrm{~ for~ all~} x\in L^{p}(\g)\mathrm{~ and~} t>0.\]
\item For any $1 < p<q < \infty$, 
\[\norm{(1-L)^{-s(\frac{1}{p}-\frac{1}{q})}(x)}_{L^{q}(\g)}\lesssim \norm{x}_{L^p(\g)}\mathrm{~ for~ all~} x\in L^{p}(\g).\]
\item There exists $1 < p<q < \infty$ such that 
\[\norm{(1-L)^{-s(\frac{1}{p}-\frac{1}{q})}(x)}_{L^{q}(\g)}\lesssim \norm{x}_{L^p(\g)}\mathrm{~ for~ all~} x\in L^{p}(\g).\]
\end{enumerate}
\end{theorem}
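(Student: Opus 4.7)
The plan is to close the implications in the cycle $(1)\Rightarrow(3)\Rightarrow(4)\Rightarrow(2)\Rightarrow(1)$, since $(1)\Rightarrow(2)$ and $(3)\Rightarrow(4)$ are trivial specializations to a single pair. The whole argument is a non-commutative adaptation of Varopoulos' theorem relating ultracontractivity of a symmetric Markov semigroup to the Riesz-potential Sobolev inequality; the bridge is the Bochner subordination formula applied to the strictly positive generator $1-L$.

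For $(1)\Rightarrow(3)$, the starting point is
\[(1-L)^{-\beta}=\frac{1}{\Gamma(\beta)}\int_{0}^{\infty}t^{\beta-1}S_{t}\,dt,\qquad \beta>0,\]
justified by spectral calculus of $1-L$ on $L^{2}(\g)$ and extended to $L^{p}(\g)$ by density. At the critical exponent $\beta=s(\tfrac{1}{p}-\tfrac{1}{q})$, naively inserting (1) produces a logarithmically divergent integral, so instead I would apply Stein's complex interpolation theorem to the analytic family $z\mapsto (1-L)^{-z}$ on a strip $0\leq \mathrm{Re}(z)\leq \beta_0$ with $\beta_0>\beta$. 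On $\mathrm{Re}(z)=0$, spectral calculus of the self-adjoint $1-L$ supplies $L^{2}\to L^{2}$ boundedness (with admissible growth in $|\mathrm{Im}\,z|$); on $\mathrm{Re}(z)=\beta_0$, the subordination integral now converges absolutely under hypothesis (1), giving an $L^{p_1}\to L^{q_1}$ bound for an appropriate pair. Interpolating at $\theta=\beta/\beta_0$ lands the operator precisely at $(p,q)$ with the critical smoothing order.

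For $(4)\Rightarrow(2)$, I would factorize $S_{t}=(1-L)^{-\beta}\circ [(1-L)^{\beta}S_{t}]$ with $\beta=s(\tfrac{1}{p}-\tfrac{1}{q})$: the first factor is $L^{p}\to L^{q}$ bounded by hypothesis (4), and the second is $L^{p}\to L^{p}$ bounded with norm $\lesssim t^{-\beta}$. Indeed, $\|(1-L)^{\beta}e^{-t(1-L)}\|_{L^{2}\to L^{2}}\lesssim t^{-\beta}$ follows immediately from spectral calculus on $L^{2}$, and this bound is transferred from $L^{2}\to L^{2}$ to $L^{p}\to L^{p}$ through $L^{r}$-contractivity of the Markov semigroup (Kac trace plus complete positivity) combined with complex interpolation on the non-commutative $L^{p}$ scale.

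Finally, $(2)\Rightarrow(1)$ is a standard semigroup extrapolation: using self-adjointness of $S_{t}$ (encoded in condition 2 of the definition of a standard noncommutative semigroup), which yields $\|S_{t}\|_{p\to q}=\|S_{t}\|_{q'\to p'}$, together with the semigroup law $S_{2t}=S_{t}\circ S_{t}$ and $L^{r}$-contractivity, I would bootstrap the single ultracontractive estimate into the full range $1\leq p<q\leq\infty$. The main obstacle throughout is the first step: the critical Sobolev index $\beta=s(\tfrac{1}{p}-\tfrac{1}{q})$ sits exactly on the boundary of absolute convergence for the subordination integral, which forces the use of Stein's complex interpolation rather than a direct estimate; everything else is technical but routine once the spectral calculus for $1-L$ and the noncommutative interpolation machinery are in place.
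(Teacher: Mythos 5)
First, a point of order: the paper does not prove this statement at all --- it is quoted verbatim from \cite[Theorem 1.1]{Xi17} and used as a black box, so there is no internal proof to compare against. Judged on its own terms, your proposal has the right architecture (a noncommutative Varopoulos-type equivalence via the subordination formula $(1-L)^{-\beta}=\Gamma(\beta)^{-1}\int_0^\infty t^{\beta-1}S_t\,dt$, with the $e^{-t}$ damping in $S_t$ handling convergence at $t=\infty$), but the central step $(1)\Rightarrow(3)$ contains a genuine gap. You correctly identify that the critical index $\beta=s(\tfrac1p-\tfrac1q)$ sits on the boundary of absolute convergence, but Stein interpolation on the family $z\mapsto(1-L)^{-z}$ cannot repair this with the endpoints you choose. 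The reason is an affine obstruction: the endpoint at $\mathrm{Re}\,z=0$ is the pair $(2,2)$ with index $0=s(\tfrac12-\tfrac12)$, i.e.\ it lies \emph{on} the critical line, while the endpoint at $\mathrm{Re}\,z=\beta_0$ must satisfy $\beta_0>s(\tfrac1{p_1}-\tfrac1{q_1})$ strictly for the subordination integral to converge at $t=0$, i.e.\ it lies strictly \emph{above} the critical line. Since both the critical relation and the exponent bookkeeping of complex interpolation are affine in $\theta$, every interpolated point with $\theta>0$ again lies strictly above the critical line; you can never land on $\beta=s(\tfrac1p-\tfrac1q)$ except at the degenerate point $z=0$. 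The standard resolution --- and the one needed here --- is real interpolation: split the subordination integral at a level $T$ chosen in terms of $\|f\|_p$ and the distribution parameter $\lambda$, bound the tail in $L^\infty$ (or $L^q$) by ultracontractivity and the head in $L^p$ by contractivity, optimize $T$ to get a weak-type $(p,q)$ estimate, and then invoke the noncommutative Marcinkiewicz interpolation theorem. Without this (or an equivalent device), the implication is not established.

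Two further soft spots are worth flagging. In $(4)\Rightarrow(2)$, the bound $\|(1-L)^{\beta}S_t\|_{L^p\to L^p}\lesssim t^{-\beta}$ for $p\neq 2$ does not follow from ``spectral calculus plus $L^r$-contractivity plus complex interpolation'': interpolating the $L^2$ bound $t^{-\beta}$ against $L^1$- or $L^\infty$-contractivity of $S_t$ says nothing about $(1-L)^\beta S_t$ on those endpoint spaces. What is actually needed is analyticity of the symmetric Markov semigroup on $L^p(\g)$ for $1<p<\infty$ (the noncommutative analogue of Stein's theorem, due to Junge--Le Merdy--Xu), which yields $\|(1-L)S_t\|_{p\to p}\lesssim t^{-1}$ and hence the fractional version; this is a substantive ingredient, not a routine interpolation. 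The extrapolation $(2)\Rightarrow(1)$, by contrast, is fine in outline (self-adjointness, the semigroup law, $L^r$-contractivity, and Coulhon-type iteration do carry a single ultracontractive pair to the full range $1\le p<q\le\infty$), though it too deserves an explicit iteration argument rather than a one-line appeal.
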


Using the above Theorem, we will find the optimal $s>0$ which validates the above four equivalent statements of Theorem \ref{thm-Xiao}. Indeed, by the following Theorem \ref{thm-poly-ultra}, we can find out that the optimal exponent $s$ is equal to the polynomial growth order $\gamma$ under the assumption $b_k\approx (1+k)^{\gamma}$.

Recall that a compact quantum group $\g$ is called {\it co-amenable} if there exists a contractive approximate identity $(e^i)_i$ in the convolution algebra $L^1(\g)$. Such a family $(e^i)_i$ satisfies $\displaystyle \lim_i \widehat{e^i}(\alpha)=\mathrm{Id}_{n_{\alpha}}$ for each $\alpha\in \mathrm{Irr}(\g)$. It is known that every dual of a polynomially growing discrete quantum group is co-amenable \cite[Proposition 2.1]{BaVe09}.

\begin{theorem}\label{thm-poly-ultra}
Let $\g$ be a general compact quantum group of Kac type and $w:\mathrm{Irr}(\g)\rightarrow (0,\infty )$ be a positive function.
\begin{enumerate}
\item If $\displaystyle C_w=\sum_{\alpha\in \mathrm{Irr}(\g)} e^{-2w(\alpha)}n_{\alpha}^2 <\infty$, then 
\begin{equation}
\left \| \sum_{\alpha\in \mathrm{Irr}(\g)} e^{-w(\alpha)}p_{\alpha}(f)\right \|_{L^{\infty}(\g)}\leq \sqrt{C_w} \left \|f\right \|_{L^2(\g)}~for~all~f\in L^2(\g).
\end{equation}
\item Conversely, if we assume that $\g$ is co-amenable and
\begin{equation}
\left \| \sum_{\alpha\in \mathrm{Irr}(\g) }  e^{-2w(\alpha)} p_{\alpha}(f)\right \|_{L^{\infty}(\g)}\leq  C \left \|f\right \|_{L^1(\g)}~for~all~f\in L^1(\g),
\end{equation}
then we have $\displaystyle \sum_{\alpha\in \mathrm{Irr}(\g)}e^{-2w(\alpha)}n_{\alpha}^2 \leq C$.
\end{enumerate}
\end{theorem}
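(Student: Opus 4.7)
\emph{Part (1).} The plan is to exploit that $M_w : f \mapsto \sum_{\alpha} e^{-w(\alpha)} p_{\alpha}(f)$ is a self-adjoint Fourier multiplier with diagonal symbol $e^{-w(\alpha)} \mathrm{Id}_{n_\alpha}$, and pass to the adjoint formulation $\|M_w\|_{L^2(\g)\to L^\infty(\g)} = \|M_w^*\|_{L^1(\g)\to L^2(\g)}$. For $\phi \in L^1(\g)$, Plancherel combined with the matrix inequality $\|A\|_{HS}^2 \leq n\|A\|_{\mathrm{op}}^2$ and the Hausdorff-Young estimate $\|\widehat{\phi}\|_{\ell^\infty(\widehat{\g})} \leq \|\phi\|_{L^1(\g)}$ gives
\begin{equation*}
\|M_w^*(\phi)\|_{L^2(\g)}^2 = \sum_{\alpha} n_{\alpha} e^{-2w(\alpha)} \|\widehat{\phi}(\alpha)\|_{HS}^2 \leq \sum_{\alpha} n_{\alpha}^2 e^{-2w(\alpha)} \|\widehat{\phi}(\alpha)\|_{\mathrm{op}}^2 \leq C_w \|\phi\|_{L^1(\g)}^2,
\end{equation*}
and duality closes the estimate.

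\emph{Part (2).} Set $N(f) = \sum_{\alpha} e^{-2w(\alpha)} p_{\alpha}(f)$, so $\|N\|_{L^1 \to L^\infty} \leq C$ by hypothesis. The plan is to test $N$ against a Fej\'er-type approximate identity: co-amenability of $\g$ furnishes a net $(e^i) \subset \mathrm{Pol}(\g)$ with $\|e^i\|_{L^1(\g)} \leq 1$, finite Fourier support, and central symbols $\widehat{e^i}(\alpha) = c^i_{\alpha}\, \mathrm{Id}_{n_\alpha}$ with $0 \leq c^i_\alpha \to 1$ for each $\alpha$. Then $N(e^i) \in \mathrm{Pol}(\g)$, bringing it into the domain of the counit $\epsilon$, which is $\|\cdot\|_{L^\infty(\g)}$-continuous on $\mathrm{Pol}(\g)$ precisely by co-amenability. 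Using $\epsilon(u^\alpha_{j,k}) = \delta_{j,k}$, a direct computation produces
\begin{equation*}
\epsilon(N(e^i)) = \sum_{\alpha} e^{-2w(\alpha)} n_\alpha\, \mathrm{tr}(\widehat{e^i}(\alpha)) = \sum_{\alpha} c^i_\alpha\, e^{-2w(\alpha)} n_\alpha^2 \leq \|N(e^i)\|_{L^\infty(\g)} \leq C.
\end{equation*}
Since each summand is non-negative and converges to $e^{-2w(\alpha)} n_\alpha^2$, Fatou's lemma delivers $\sum_\alpha e^{-2w(\alpha)} n_\alpha^2 \leq C$.

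\emph{Expected obstacle.} The delicate step is arranging the test net in part (2) to simultaneously satisfy (i) finite Fourier support (so $N(e^i)$ stays inside $\mathrm{Pol}(\g)$, the natural domain of $\epsilon$) and (ii) non-negative central symbols (so Fatou applies without sign issues). Both properties are standard consequences of co-amenability (cf.\ \cite{BaVe09}), after which the argument runs automatically. Part (1), by contrast, is essentially formal once one spots the reduction to bounding $\|M_w^*\|_{L^1 \to L^2}$.
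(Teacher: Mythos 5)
Part (1) of your proposal is correct and is essentially the dual formulation of the paper's argument: the paper bounds $\|p_\alpha(f)\|_{L^\infty(\g)}\le n_\alpha\|p_\alpha(f)\|_{L^2(\g)}$ term by term (Hausdorff--Young plus $\|A\|_{S^1_n}\le \sqrt{n}\,\|A\|_{S^2_n}$) and finishes with Cauchy--Schwarz, which is the same input you package as an $L^1\to L^2$ bound for the adjoint. One small imprecision: for a general positive function $w$ on $\mathrm{Irr}(\g)$ the multiplier $M_w$ is not self-adjoint with respect to $h$; its adjoint under the $(L^\infty,L^1)$ pairing is the multiplier with symbol $e^{-w(\overline{\alpha})}$, where $\overline{\alpha}$ is the contragredient. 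Since $n_{\overline{\alpha}}=n_\alpha$, the relevant constant is unchanged and your bound survives, but the claim ``self-adjoint'' should be replaced by this adjoint computation.

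Part (2) takes a genuinely different route (counit versus self-pairing), and this is where the gap sits. Your argument needs a contractive approximate identity $(e^i)\subset\mathrm{Pol}(\g)$ with $\widehat{e^i}(\alpha)=c^i_\alpha\,\mathrm{Id}_{n_\alpha}$, $c^i_\alpha\ge 0$ and $c^i_\alpha\to 1$. Co-amenability only hands you a contractive approximate identity with $\widehat{e^i}(\alpha)\to\mathrm{Id}_{n_\alpha}$; neither centrality nor non-negativity of $\mathrm{tr}(\widehat{e^i}(\alpha))$ comes for free (even for a state $e^i$ there is no sign constraint on $e^i((u^\alpha_{j,j})^*)$), and without $c^i_\alpha\ge 0$ your Fatou step fails. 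These properties are not in \cite{BaVe09} and would have to be proved (centralizing is an $L^1$-contraction in the Kac case; positivity requires passing to a convolution square whose Fourier coefficients are $\widehat{e^i}(\alpha)\widehat{e^i}(\alpha)^*$). The paper sidesteps all of this in one stroke by pairing $N((e^i)^*)$ against $e^i$ in the $(L^\infty,L^1)$ duality, which yields $\sum_\alpha e^{-2w(\alpha)}n_\alpha\,\mathrm{tr}(\widehat{e^i}(\alpha)\widehat{e^i}(\alpha)^*)\le C$, a sum of manifestly non-negative terms each converging to $e^{-2w(\alpha)}n_\alpha^2$, so Fatou applies with no extra structure on $(e^i)$ beyond $e^i\in\mathrm{Pol}(\g)$ and contractivity. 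Your proof of (2) is completable, but as written it is incomplete at exactly the step you flag.
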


\begin{proof}
\begin{enumerate}
\item For a given $\displaystyle f=\sum_{\alpha\in \mathrm{Irr}(\g)}p_{\alpha}(f)\in L^2(\g)$, we have
\begin{align}
\left \|p_{\alpha}(f)\right\|_{L^{\infty}(\g)}&\leq n_{\alpha}\mathrm{tr}(\left |\widehat{f}(\alpha)\right |)\leq n_{\alpha}^{\frac{3}{2}}\left \|\widehat{f}(\alpha) \right\|_{S^2_{n_{\alpha}}}= n_{\alpha}\norm{p_{\alpha}(f)}_{L^2(\g)}
\end{align}
by the Hausdorff-Young inequality and the H$\ddot{\mathrm{o}}$lder inequality, and hence
\begin{equation}
\norm{\sum_{\alpha\in \mathrm{Irr}(\g)} e^{-w(\alpha)}p_{\alpha}(f)}_{L^{\infty}(\g)}\leq \sum_{\alpha\in \mathrm{Irr}(\g)} n_{\alpha} e^{-w(\alpha)}\norm{p_{\alpha}(f)}_{L^2(\g)}\leq \sqrt{C_w} \norm{f}_{L^2(\g)}.
\end{equation}
\item Let $(e^i)_i$ be a contractive approximate identity in $L^1(\g)$ and then we may assume $e^i\in \mathrm{Pol}(\g)$ for all $i$. Then we have
\begin{align*}
C&\geq C \norm{e^i}_{L^1(\g)}\\
&\geq  \la \sum_{\alpha\in \mathrm{Irr}(\g)}  e^{-2w(\alpha)}  p_{\alpha}((e^i)^*),e^i \ra_{L^{\infty}(\g),L^1(\g)}\\
&=\sum_{\alpha\in \mathrm{Irr}(\g)}  e^{-2w(\alpha)}  n_{\alpha}\mathrm{tr}(e^i(\alpha)(e^i(\alpha))^*).
\end{align*}
Hence, by taking limit as $i\rightarrow \infty$, we obtain $\displaystyle \sum_{\alpha\in \mathrm{Irr}(\g)} e^{-2w(\alpha)}  n_{\alpha}^2\leq C$.
\end{enumerate}
\end{proof}

\begin{corollary}\label{cor-poly-ultra}
Let $\g$ be a compact matrix quantum group of Kac type whose dual satisfies $b_k\lesssim (1+k)^{\gamma}$ and $(T_t)_{t>0}$ be a standard semigroup whose infinitesimal generator $L$ satisfies
\begin{equation}
L(u^{\alpha}_{i,j})=-l(\alpha)u^{\alpha}_{i,j}~and~l(\alpha)\sim |\alpha|.
\end{equation}
Then there exists a constant $K=K(s)>0$ such that
\begin{equation}\label{ineq4}
\norm{e^{-t}T_t(f)}_{L^{\infty}(\g)}\leq  \frac{K \norm{f}_{L^2(\g)}}{t^{s}}~for~all~f\in L^2(\g)~and~t>0
\end{equation}
if $\displaystyle s\geq \frac{\gamma}{2}$. Moreover, the converse holds if $b_k\approx (1+k)^{\gamma}$.
\end{corollary}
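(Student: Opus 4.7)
The plan is to derive both implications directly from Theorem \ref{thm-poly-ultra} combined with the equivalence of Theorem \ref{thm-Xiao}, so that the entire problem reduces to comparing the exponential-weighted trace $\sum_{\alpha} n_\alpha^2 e^{-t(1+l(\alpha))}$ with the growth profile $b_k$ via an Abel-summation argument.

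For the sufficiency, I apply Theorem \ref{thm-poly-ultra}(1) with the weight $w(\alpha)=t(1+l(\alpha))$, so that $e^{-t}T_t(f)=\sum_{\alpha} e^{-w(\alpha)}p_\alpha(f)$ and the inequality becomes $\|e^{-t}T_t(f)\|_{L^\infty(\g)}\leq \sqrt{C_w}\|f\|_{L^2(\g)}$ with $C_w=\sum_\alpha n_\alpha^2 e^{-2t(1+l(\alpha))}$. Using the comparability $l(\alpha)\geq c_1|\alpha|$, grouping by spheres and applying summation by parts gives $C_w \leq e^{-2t}(1-e^{-2tc_1})\sum_{k\geq 0} e^{-2tc_1 k}\,b_k$. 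The hypothesis $b_k\lesssim (1+k)^\gamma$ combined with the standard Gamma-type estimate $\sum_k (1+k)^\gamma e^{-2tc_1 k}\lesssim t^{-\gamma-1}$ then yields $C_w\lesssim t^{-\gamma}$ for $t\in(0,1]$, so $\sqrt{C_w}\lesssim t^{-\gamma/2}\leq t^{-s}$ whenever $s\geq \gamma/2$. For $t\geq 1$ the factor $e^{-t}$ combined with $L^2$-contractivity of $T_{t-1}$ and the boundedness $T_1:L^2(\g)\to L^\infty(\g)$ already established at $t=1$ gives an exponential upper bound that dominates any $t^{-s}$.

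For the converse, I first invoke Theorem \ref{thm-Xiao} to upgrade the assumed $L^2\to L^\infty$ bound $\|e^{-t}T_t f\|_\infty\lesssim t^{-s}\|f\|_2$ to the $L^1\to L^\infty$ bound $\|e^{-t}T_t f\|_\infty\lesssim t^{-2s}\|f\|_1$ (since the Xiao parameter corresponding to $s$ in the $(2,\infty)$ case is $2s$). Because polynomial growth of $\widehat{\g}$ forces co-amenability of $\g$ by \cite[Proposition 2.1]{BaVe09}, Theorem \ref{thm-poly-ultra}(2) applies with $w(\alpha)=t(1+l(\alpha))/2$ and yields $\sum_\alpha n_\alpha^2 e^{-t(1+l(\alpha))}\lesssim t^{-2s}$. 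On the other hand, $l(\alpha)\leq c_2|\alpha|$ together with truncation at $k\leq \lfloor 1/t\rfloor$ shows that the same sum is bounded below by $e^{-t-c_2} b_{\lfloor 1/t\rfloor}$, and the lower bound $b_k\gtrsim (1+k)^\gamma$ coming from $b_k\approx (1+k)^\gamma$ makes this $\gtrsim t^{-\gamma}$ for small $t$. Comparing the two estimates as $t\to 0^+$ forces $s\geq \gamma/2$.

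The delicate point is the interplay between the multiplicative factor $e^{-t}$ (essential to damp the trivial representation so that Theorem \ref{thm-poly-ultra} can actually be applied) and the polynomial behaviour at small $t$; the Abel-summation identity converting sphere sums $s_k$ into ball sums $b_k$ is what makes the hypothesis $b_k\approx (1+k)^\gamma$ (which controls only balls) directly usable in both directions, while the comparability $l(\alpha)\sim |\alpha|$ handles the replacement of spectral data of $L$ by the length function on $\mathrm{Irr}(\g)$. The remaining work is bookkeeping of the exponent translation $s\leftrightarrow 2s$ between the $L^2\to L^\infty$ and $L^1\to L^\infty$ pictures in Theorem \ref{thm-Xiao}.
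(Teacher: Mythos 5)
Your proof is correct, and the sufficiency half follows essentially the same route as the paper: reduce to Theorem \ref{thm-poly-ultra}(1) with the weight $w(\alpha)=t(1+l(\alpha))$, convert the sphere sum $\sum_k s_k e^{-2tc_1 k}$ into a ball sum via Abel summation, and invoke the Gamma-type bound $\sum_k(1+k)^{\gamma}e^{-2tc_1k}\lesssim t^{-\gamma-1}$; your split at $t=1$ (handling $t\geq 1$ by $e^{-(t-1)}$ times the $t=1$ bound and $L^2$-contractivity of $T_{t-1}$) replaces the paper's more explicit split at $t=\gamma/4$ but accomplishes the same thing. Where you genuinely diverge is the converse. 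Both you and the paper first upgrade the hypothesis to an $L^1\to L^\infty$ bound with exponent $2s$ (you via Theorem \ref{thm-Xiao}, the paper by factoring $e^{-t}T_t=(e^{-t/2}T_{t/2})^2$ by hand --- these are the same computation) and then feed it into Theorem \ref{thm-poly-ultra}(2) using co-amenability. But to extract $2s\geq\gamma$ from $\sum_\alpha n_\alpha^2e^{-t(1+l(\alpha))}\lesssim t^{-2s}$, the paper runs Abel summation again and compares with the integral $\int_t^\gamma y^\gamma e^{-y}\,dy$, whereas you simply truncate the sum at $|\alpha|\leq\lfloor 1/t\rfloor$, observe that each surviving term satisfies $e^{-t(1+l(\alpha))}\geq e^{-t-c_2}$, and bound the sum below by $e^{-t-c_2}b_{\lfloor 1/t\rfloor}\gtrsim t^{-\gamma}$. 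Your lower-bound argument is shorter and more transparent, uses the hypothesis $b_k\gtrsim(1+k)^{\gamma}$ exactly where it is needed, and avoids the second Abel summation entirely; the paper's integral-comparison version is what you would need if you only had sphere data, but under the ball hypothesis your truncation is the cleaner route.
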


\begin{proof}
\begin{enumerate}
\item Let us assume $s\geq \displaystyle \frac{\gamma}{2}$. Then, thanks to Theorem \ref{thm-poly-ultra}, it is sufficient to show that
\begin{equation}
\sup_{0<t<\infty} \left \{ t^{2s}\sum_{\alpha\in \mathrm{Irr}(\g)}\frac{n_{\alpha}^2}{e^{2t(1+l(\alpha))}} \right\}<\infty.
\end{equation}
Since there exists a constant $C>0$ such that $1+l(\alpha)\geq C(1+|\alpha|)$, we have
\begin{equation}
\sum_{\alpha\in \mathrm{Irr}(\g)}\frac{n_{\alpha}^2}{e^{2t(1+l(\alpha))}}\leq \sum_{\alpha\in \mathrm{Irr}(\g)}\frac{n_{\alpha}^2}{e^{2tC(1+|\alpha|)}}.
\end{equation}

Therefore, it is enough to prove that 
\begin{equation}
\sup_{0<t<\infty} \left \{ t^{2s}\sum_{\alpha\in \mathrm{Irr}(\g)}\frac{n_{\alpha}^2}{e^{2t(1+|\alpha |)}} \right\}<\infty.
\end{equation}

Note that 
\begin{align*}
\sum_{\alpha\in \mathrm{Irr}(\g)}\frac{n_{\alpha}^2}{e^{2t(1+|\alpha|)}}&=\lim_{n\rightarrow \infty}\sum_{k=0}^n\frac{s_k}{e^{2t(1+k)}}\\
&=\lim_{n\rightarrow \infty}\left \{  b_ne^{-2t(1+n)}+\sum_{k=0}^{n-1} b_k(e^{-2t(1+k)}-e^{-2t(2+k)}) \right\}\\
&\leq  \sum_{k=0}^{\infty} b_k\cdot 2t e^{-2t(1+k)} \lesssim t \sum_{k=0}^{\infty} (1+k)^{\gamma}e^{-2t(1+k)} .
\end{align*}

Since $\displaystyle x\mapsto x^{2s+2}e^{-2x}$ has an upper bound $C'=C'(s)$ on $[0,\infty )$, we have
\[\frac{t^{2s+1}(1+k)^{\gamma+2}}{e^{2t(1+k)}}\leq \frac{4}{\gamma}\cdot \frac{t^{2s+2}(1+k)^{2s+2}}{e^{2t(1+k)}} \leq \frac{ 4C'}{\gamma} ~\mathrm{for~any~}t\geq \frac{\gamma}{4} . \]
Therefore, $\displaystyle \sup_{\frac{\gamma}{4}\leq t<\infty} \left \{ t^{2s+1}\sum_{k=0}^{\infty} (1+k)^{\gamma}e^{-2t(1+k)} \right\} \leq \sum_{k=0}^{\infty}\frac{4C'}{\gamma (1+k)^2}<\infty$.

From now on, let us handle the case $\displaystyle 0<t<\frac{\gamma}{4}$. Since the function $f(x)=x^{\gamma}e^{-2x}$ is $\displaystyle \left \{ \begin{array}{cc}\mathrm{increasing}&\mathrm{~if~}x<\frac{\gamma}{2} \\ \mathrm{decreasing}&\mathrm{~if~}x>\frac{\gamma}{2} \end{array} \right . $, we have

\begin{align*}
t^{\gamma+1} \sum_{k=0}^{\infty} (1+k)^{\gamma} e^{-2t(1+k)}&\leq t \left (\sum_{k:(1+k)t<\frac{\gamma}{2}}+\sum_{k:(1+k)t\geq \frac{\gamma}{2}}\right ) \left [ (1+k)^{\gamma}t^{\gamma}e^{-2t(1+k)}\right ]\\
&\leq t[(\frac{\gamma}{2e})^{\gamma}\cdot \frac{\gamma}{2t}+\int_{\frac{\gamma}{2t}-2}^{\infty}(1+x)^{\gamma}t^{\gamma}e^{-2t(1+x)}dx]\\
&= t[(\frac{\gamma}{2e})^{\gamma}\cdot \frac{\gamma}{2t}+\int_{\frac{\gamma}{2}-t}^{\infty} y^{\gamma}e^{-2y}(\frac{dy}{t})]=(\frac{\gamma}{2e})^{\gamma}\cdot \frac{\gamma}{2}+\int_{\frac{\gamma}{2}-t}^{\infty} y^{\gamma}e^{-2y} dy.
\end{align*}
Hence, we have
\begin{align*}
\sup_{0<t<\frac{\gamma}{4}}\left \{t^{2s+1}\sum_{k=0}^{\infty} (1+k)^{\gamma}e^{-2t(1+k)} \right\}&\leq (\frac{\gamma}{4})^{2s-\gamma}\sup_{0<t<\frac{\gamma}{4}}\left \{t^{\gamma+1}\sum_{k=0}^{\infty} (1+k)^{\gamma}e^{-2t(1+k)} \right \} \\
& \leq (\frac{\gamma}{4})^{2s-\gamma}[ (\frac{\gamma}{2e})^{\gamma}\cdot \frac{\gamma}{2}+\int_{\frac{\gamma}{4}}^{\infty} y^{\gamma}e^{-2y} dy]<\infty .
\end{align*}
\item Conversely, let us assume $b_k\approx (1+k)^{\gamma}$ and (\ref{ineq4}) hold. Then 
\begin{equation}
\norm{e^{-t}T_t(f)}_{L^{\infty}(\g)}\leq \frac{K^2 2^{2s}\norm{f}_{L^1(\g)}}{t^{2s}},
\end{equation}
so that Theorem \ref{thm-poly-ultra} (2) tells us that
\begin{equation}
\sup_{0<t<\infty}\left \{ t^{2s} \sum_{\alpha\in \mathrm{Irr}(\g)}\frac{n_{\alpha}^2}{e^{t(1+|\alpha |)}}\right\}\leq K^2 2^{2s}<\infty
\end{equation}

Then, as in the proof of the if part, we have
\begin{align*}
K^2 2^{2s}\geq t^{2s} \sum_{k= 0}^{\infty}\frac{s_k}{e^{t(1+k)}}&=t^{2s} \sum_{k=0}^{\infty}b_k(e^{-t(1+k)}-e^{-t(2+k)})\\
&\gtrsim  t^{2s+1} \sum_{k=0}^{\infty} (1+k)^{\gamma}e^{-t(2+k)}\\
&=t^{2s-\gamma+1}e^{-t}\sum_{k=0}^{\infty}(1+k)^{\gamma}t^{\gamma}e^{-t(1+k)}\\
&\geq t^{2s-\gamma+1}e^{-t}\sum_{k: (1+k)t< \gamma }(1+k)^{\gamma}t^{\gamma}e^{-t(1+k)}.
\end{align*}

Since the function $x\mapsto x^{\gamma}e^{-x}$ is increasing in $[0,\gamma]$, we have
\begin{align*}
K^2 2^{2s}&\gtrsim  t^{2s-\gamma+1}e^{-t}\int_{0}^{\frac{\gamma}{t}-1} (1+x)^{\gamma}t^{\gamma}e^{-t(1+x)}dx\\
&=t^{2s-\gamma}e^{-t}\int_t^{\gamma}y^{\gamma}e^{-y}dy.
\end{align*}

By taking $t\rightarrow 0^+$, we obtain $\displaystyle \lim_{t\rightarrow 0^+}t^{2s-\gamma}<\infty$, so that $ 2s \geq \gamma$.
\end{enumerate}
\end{proof}

Therefore, thanks to Theorem \ref{thm-Xiao}, we obtain the following sharp Sobolev embedding property:

\begin{corollary}\label{cor-poly-sharp}
Let $\g$ be a compact matrix quantum group of Kac type whose dual satisfies $b_k \approx (1+k)^{\gamma}$ and let $(T_t)_{t>0}$ be a standard noncommutative semigroup whose infinitesimal generator $L$ satisfies
\begin{equation}
L(u^{\alpha}_{i,j})=-l(\alpha)u^{\alpha}_{i,j}~and~l(\alpha)\sim |\alpha|.
\end{equation}
Then the following are equivalent:
\begin{enumerate}
\item For any $1<p\leq 2$ there exists a constant $K=K(p)>0$ such that
\begin{equation}
\left (\sum_{\alpha\in \mathrm{Irr}(\g)}\frac{n_{\alpha}}{(1+|\alpha|)^{s (\frac{2}{p}-1)}}\norm{\widehat{f}(\alpha)}_{HS}^2\right )^{\frac{1}{2}}\leq K \norm{f}_{L^p(\g)}~for~all~f\in L^p(\g)
\end{equation}
\item There exist $1<p\leq 2$ and a constant $K>0$ such that
\begin{equation}
\left (\sum_{\alpha\in \mathrm{Irr}(\g)}\frac{n_{\alpha}}{(1+|\alpha|)^{s (\frac{2}{p}-1)}}\norm{\widehat{f}(\alpha)}_{HS}^2 \right )^{\frac{1}{2}}\leq K \norm{f}_{L^p(\g)}~for~all~f\in L^p(\g)
\end{equation}
\item  $s\geq \gamma$.
\end{enumerate}
\end{corollary}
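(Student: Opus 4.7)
My plan is to combine Theorem~\ref{thm-Xiao} with Corollary~\ref{cor-poly-ultra}, using the identification of the left-hand side of the Sobolev inequalities in (1) and (2) with the $L^2$-norm of a fractional power of $(1-L)$.

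The first step is a translation: since $L(u^{\alpha}_{i,j}) = -l(\alpha)u^{\alpha}_{i,j}$, the Plancherel identity gives
\begin{equation*}
\norm{(1-L)^{-s(\frac{1}{p}-\frac{1}{2})}f}_{L^2(\g)}^2 = \sum_{\alpha\in \mathrm{Irr}(\g)} \frac{n_{\alpha}}{(1+l(\alpha))^{s(\frac{2}{p}-1)}} \norm{\widehat{f}(\alpha)}_{HS}^2,
\end{equation*}
and the hypothesis $l(\alpha)\sim |\alpha|$ makes this comparable, with constants depending only on $s$ and $p$, to $\sum_{\alpha}\frac{n_{\alpha}}{(1+|\alpha|)^{s(\frac{2}{p}-1)}}\norm{\widehat{f}(\alpha)}_{HS}^2$. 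Hence the Sobolev inequality in (1) (respectively (2)) is equivalent to
\begin{equation*}
\norm{(1-L)^{-s(\frac{1}{p}-\frac{1}{2})}(f)}_{L^2(\g)} \lesssim \norm{f}_{L^p(\g)},
\end{equation*}
which is exactly statement (3) (respectively (4)) of Theorem~\ref{thm-Xiao} with $q=2$ and Xiao's parameter equal to our $s$.

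The second step invokes Theorem~\ref{thm-Xiao} to pass between all four equivalent formulations. In particular, either of (1) and (2) is equivalent to the ultracontractivity estimate
\begin{equation*}
\norm{e^{-t}T_t(f)}_{L^{\infty}(\g)} \lesssim \frac{\norm{f}_{L^2(\g)}}{t^{s/2}} \quad\text{for all } f\in L^2(\g)\text{ and } t>0,
\end{equation*}
obtained by choosing $p=2$, $q=\infty$ inside Theorem~\ref{thm-Xiao}.

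The third and final step applies Corollary~\ref{cor-poly-ultra} with ultracontractivity exponent $s/2$. Under the two-sided assumption $b_k\approx (1+k)^{\gamma}$, that corollary supplies both directions: the displayed $t^{-s/2}$ ultracontractivity holds if and only if $s/2 \geq \gamma/2$, i.e., $s\geq \gamma$. Chaining these equivalences yields $(1)\Leftrightarrow (2)\Leftrightarrow (3)$. The main technical content is already concentrated in Theorem~\ref{thm-Xiao} and Corollary~\ref{cor-poly-ultra}; the only place where one must be slightly careful is the bookkeeping of exponents between the three results (matching our $s$ in the Sobolev inequality with Xiao's exponent and then with half of it as the ultracontractivity exponent), so I would include those comparisons explicitly to avoid any factor-of-two confusion.
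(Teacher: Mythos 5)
Your proposal is correct and is essentially the paper's own argument: the paper derives Corollary \ref{cor-poly-sharp} precisely by chaining Theorem \ref{thm-Xiao} (with $q=2$ to identify the Sobolev inequality with statement (3)/(4) there, and with $p=2$, $q=\infty$ to pass to ultracontractivity of order $t^{-s/2}$) with the sharp ultracontractivity threshold $s/2\geq\gamma/2$ from Corollary \ref{cor-poly-ultra}. Your explicit exponent bookkeeping matches the paper's implicit one, so there is nothing to add.
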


\begin{example}
For connected compact Lie groups $G$ and the Poisson semigroup $(e^{-t(-\Delta)^{\frac{1}{2}}})_{t>0}$, the corresponding Sobolev embedding properties are as follows: For any $1<p<q<\infty$ there exists a constant $K=K(p,q)>0$ such that
\begin{equation}
\norm{(1+(-\Delta)^{\frac{1}{2}})^{-n(\frac{1}{p}-\frac{1}{q})}(f)}_{L^q(G)}\leq K\norm{f}_{L^p(G)}~for~all~f\in L^p(G)
\end{equation}
where $n$ is the real dimension of $G$ and $\Delta$ is the Laplacian operator.
\end{example}

\subsection{Under the growth rate of spheres}

In this section, we provide one more sufficient condition for which inequalities (\ref{ineq-poly2}) are sharp without the existence of a standard noncommutative semigroup. The main ingredient is additional information on lower bounds of the growth rate of $k$-spheres. Indeed, for a compact matrix quantum group $\g$, Corollary \ref{cor-poly} tells us that inequalities (\ref{ineq-poly2}) are sharp if its discrete dual $\widehat{\g}$ satisfies 
\begin{equation}
b_k\lesssim (1+k)^{\gamma}~\mathrm{and~}s_k\gtrsim (1+k)^{\gamma-1}.
\end{equation}

Let us begin with looking at $L^2\rightarrow L^4$ case, which is the dual of $L^{\frac{4}{3}}\rightarrow L^2$ case. The following theorem is motivated by the proof of \cite[Theorem 4.5.2]{Yo18c}, which is for duals of polynomially growing discrete groups.
\begin{theorem}\label{thm-poly}
Let $\g$ be a compact matrix quantum group of Kac type whose dual satisfies
\begin{equation}
b_k\lesssim (1+k)^{\gamma_1}~\mathrm{and~} s_k\gtrsim (1+k)^{\gamma_2-1}~ with~\gamma_1,\gamma_2\geq 1.
\end{equation}
Also, suppose that there exists a constant $K>0$ such that
\begin{equation}
\left \| \sum_{\alpha\in \mathrm{Irr}(\g)}w(\left | \alpha \right |)n_{\alpha} \mathrm{tr}(\widehat{f}(\alpha)u^{\alpha})\right\|_{L^4(\g)}\leq K \norm{f}_{L^2(\g)}~for~all~f\in L^2(\g)
\end{equation}
where $w:\left \{0\right\}\cup \n\rightarrow (0,\infty)$ is a decreasing function. Then we have
\begin{equation}
\limsup_{m\rightarrow \infty}\left \{(1+m)^{\frac{3\gamma_2-2\gamma_1}{4}}  w(m)\right\} <\infty.
\end{equation}
\end{theorem}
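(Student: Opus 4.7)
The plan is to test the hypothesis on the Fej\'er--type kernel $F_m=\sum_{|\alpha|\leq m}n_\alpha \chi_\alpha$, where $\chi_\alpha=\sum_{i=1}^{n_\alpha}u^\alpha_{i,i}$ denotes the character of $\alpha$. By Schur orthogonality $\widehat{F_m}(\alpha)=I_{n_\alpha}$ for $|\alpha|\leq m$ and vanishes otherwise, so $\|F_m\|_{L^2(\g)}^2=b_m$. Moreover $F_m$ is self-adjoint because the involution $\alpha\mapsto \bar\alpha$ preserves both $|\alpha|$ and $n_\alpha$; the same reasoning shows that the hypothesized sum $M_w F_m:=\sum_{|\alpha|\leq m}w(|\alpha|)n_\alpha\chi_\alpha$ is self-adjoint, since $w$ takes values in $(0,\infty)$.

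The core step is to evaluate $\|M_w F_m\|_{L^4(\g)}^4=\|(M_w F_m)^2\|_{L^2(\g)}^2$ by expanding the pointwise square via the character identity $\chi_\alpha\chi_\beta=\chi_{\alpha\tp\beta}=\sum_\gamma N^\gamma_{\alpha\beta}\chi_\gamma$, where $N^\gamma_{\alpha\beta}$ is the multiplicity of $\gamma$ in $\alpha\tp\beta$. This produces
\begin{equation*}
(M_w F_m)^2=\sum_{\gamma\in\mathrm{Irr}(\g)}c_\gamma\chi_\gamma,\qquad c_\gamma=\sum_{|\alpha|,|\beta|\leq m}w(|\alpha|)w(|\beta|)n_\alpha n_\beta N^\gamma_{\alpha\beta},
\end{equation*}
and Plancherel gives $\|(M_w F_m)^2\|_{L^2(\g)}^2=\sum_\gamma c_\gamma^2$.

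Next I combine two elementary observations via Cauchy--Schwarz. First, the dimension identity $\sum_\gamma N^\gamma_{\alpha\beta}n_\gamma=n_\alpha n_\beta$ gives $\sum_\gamma c_\gamma n_\gamma=\bigl(\sum_{|\alpha|\leq m}w(|\alpha|)n_\alpha^2\bigr)^2\geq w(m)^2 b_m^2$ by monotonicity of $w$. Second, the compact matrix structure of $\g$ forces any $\gamma\subseteq\alpha\tp\beta$ with $|\alpha|,|\beta|\leq m$ to satisfy $|\gamma|\leq 2m$, so the support of $c_\gamma$ lies in $B_{2m}$ and $\sum_{\gamma:\,c_\gamma\neq 0}n_\gamma^2\leq b_{2m}$. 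Cauchy--Schwarz then yields
\begin{equation*}
\|M_w F_m\|_{L^4(\g)}^4=\sum_\gamma c_\gamma^2\geq\frac{\bigl(\sum_\gamma c_\gamma n_\gamma\bigr)^2}{\sum_{c_\gamma\neq 0}n_\gamma^2}\geq\frac{w(m)^4 b_m^4}{b_{2m}}.
\end{equation*}

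Plugging this into the hypothesis $\|M_w F_m\|_{L^4(\g)}\leq K\|F_m\|_{L^2(\g)}=K b_m^{1/2}$ gives $w(m)^4\leq K^4 b_{2m}/b_m^2$. The assumption $b_k\lesssim (1+k)^{\gamma_1}$ yields $b_{2m}\lesssim (1+m)^{\gamma_1}$, and summing $s_k\gtrsim (1+k)^{\gamma_2-1}$ over $k\leq m$ (using $\gamma_2\geq 1$) gives $b_m\gtrsim(1+m)^{\gamma_2}$. Hence $w(m)\lesssim (1+m)^{(\gamma_1-2\gamma_2)/4}$; since the two growth bounds force $\gamma_2\leq\gamma_1$, one has $(\gamma_1-2\gamma_2)/4\leq (2\gamma_1-3\gamma_2)/4$, and the claimed $\limsup_m (1+m)^{(3\gamma_2-2\gamma_1)/4}w(m)<\infty$ follows. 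The main obstacle is the choice of test function: $F_m$ (the ``ball'' idempotent) captures $b_m$ both in the Plancherel mass $\sum_\gamma c_\gamma n_\gamma$ and in the Cauchy--Schwarz denominator, whereas the single-level analogue $P_m=\sum_{|\alpha|=m}n_\alpha\chi_\alpha$ only gives $\|P_m\|_{L^4}^4\geq s_m^4/b_{2m}$ and misses the target exponent by a factor of order $(1+m)^{1/2}$.
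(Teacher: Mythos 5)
Your proof is correct and in fact yields a slightly stronger conclusion, but the key lower bound is obtained by a genuinely different route than the paper's. Both arguments test the hypothesis on the ball kernel $\sum_{\alpha\in B_m}n_\alpha\chi_\alpha$ and reduce to bounding $\|(M_wF_m)^2\|_{L^2(\g)}$ from below; they diverge at that point. The paper expands $\|(M_wF_m)^2\|_{L^2}^2$ as a sum over $\sigma\in\mathrm{Irr}(\g)$ of squared fusion coefficients, lower-bounds it by keeping only the pairs with $\alpha_1\in B_{m-|\sigma|}$ (so that every contributing $\alpha_2$ stays inside $B_m$), applies the dimension identity to arrive at $b_m^{-2}w(m)^4\sum_{k=0}^m b_{m-k}^2 s_k$, and finishes with a Riemann-sum computation $\sum_{k}(m-k)^{2\gamma_2}k^{\gamma_2-1}\approx m^{3\gamma_2}$; this is where the spherical hypothesis $s_k\gtrsim(1+k)^{\gamma_2-1}$ is used at the level of individual spheres. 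You instead apply a single Cauchy--Schwarz against the dimension vector $(n_\gamma)_{\gamma\in B_{2m}}$, exploiting the exact mass identity $\sum_\gamma c_\gamma n_\gamma=\bigl(\sum_{\alpha\in B_m}w(|\alpha|)n_\alpha^2\bigr)^2$ together with the support bound $\{c_\gamma\neq0\}\subseteq B_{2m}$. This is shorter, avoids the convolution-type sum entirely, uses the sphere hypothesis only through its integrated consequence $b_m\gtrsim(1+m)^{\gamma_2}$, and gives $w(m)\lesssim(1+m)^{(\gamma_1-2\gamma_2)/4}$, which implies the stated bound because $\gamma_2\leq\gamma_1$ forces $(\gamma_1-2\gamma_2)/4\leq(2\gamma_1-3\gamma_2)/4$ (the two exponents coincide exactly in the case $\gamma_1=\gamma_2=\gamma$ used in Corollary \ref{cor-poly}). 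Two small remarks: the identity $\|x\|_{L^4}^4=\|x^2\|_{L^2}^2$ requires $x=x^*$ and hence the conjugation-invariance $|\bar\alpha|=|\alpha|$ of the length function; since you only need a lower bound on $\|M_wF_m\|_{L^4}$, you could replace this equality by H\"older's inequality $\|x^2\|_{L^2}\leq\|x\|_{L^4}^2$, which holds for arbitrary $x$ and is what the paper uses (though the paper also tacitly invokes $|\bar\sigma|=|\sigma|$ elsewhere in its proof, so this is a shared convention rather than a gap). Also, the fusion ring of a compact quantum group need not be commutative, but your expansion $\chi_\alpha\chi_\beta=\sum_\gamma N^\gamma_{\alpha\beta}\chi_\gamma$ with $N^\gamma_{\alpha\beta}$ the multiplicity of $\gamma$ in $\alpha\tp\beta$ (in that order) is correct and is all that your argument uses.
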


\begin{proof}
For a given $f\in L^2(\g)$, we denote by $T_w(f)\sim \displaystyle \sum_{\alpha\in \mathrm{Irr}(\g)}w(\left | \alpha \right |)n_{\alpha} \mathrm{tr}(\widehat{f}(\alpha)u^{\alpha})$ and take $\xi_m=\displaystyle \frac{1}{\sqrt{b_m}}\sum_{\alpha\in B_m}n_{\alpha}\chi_{\alpha}$. Then 
\begin{align*}
1=\norm{\xi_m}_{L^2(\g)}\norm{\xi_m}_{L^2(\g)}&\gtrsim \norm{T_w(\xi_m)}_{L^4(\g)}\norm{T_w(\xi_m)}_{L^4(\g)}\\
&\geq \norm{T_w(\xi_m)T_w(\xi_m)}_{L^2(\g)}
\end{align*}
and 
\begin{align*}
\norm{T_w(\xi_m)T_w(\xi_m)}_{L^2(\g)}^2&=\norm{\sum_{\alpha_1,\alpha_2\in B_m} \frac{1}{b_m} w(\left |\alpha_1 \right |)w(\left |\alpha_2 \right |)n_{\alpha_1}n_{\alpha_2}\chi_{\alpha_1}\chi_{\alpha_2}}_{L^2(\g)}^2\\
&=\frac{1}{b_m^2}\sum_{\sigma\in \mathrm{Irr}(\g)}  \left | h(\sum_{\alpha_1,\alpha_2\in B_m}w(\left |\alpha_1 \right |)w(\left |\alpha_2 \right |)n_{\alpha_1}n_{\alpha_2}\chi_{\alpha_1}\chi_{\alpha_2} \chi_{\sigma}^*) \right |^2\\
&=\frac{1}{b_m^2}\sum_{\sigma\in \mathrm{Irr}(\g)}\left | \sum_{\alpha_1,\alpha_2\in B_m} w(\left | \alpha_1 \right | )w(\left | \alpha_2\right |)n_{\alpha_1}n_{\alpha_2}\delta_{\overline{\alpha_2}\subseteq \overline{\sigma}\tiny \tp \alpha_1}  \right |^2.
\end{align*}
Then since the sequence $(w(k))_{k\geq 0}$ is decreasing, 
\begin{align*}
&\geq \frac{w(m)^4}{b_m^2} \sum_{k=0}^m \sum_{\sigma: \left |\sigma \right |=k}  \left | \sum_{\alpha_1,\alpha_2\in B_m} n_{\alpha_1}n_{\alpha_2}\delta_{\overline{\alpha_2}\subseteq \overline{\sigma} \tiny \tp \alpha_1 } \right |^2\\
&\geq \frac{w(m)^4}{b_m^2} \sum_{k=0}^m \sum_{\sigma: \left |\sigma \right |=k}  \left | \sum_{\alpha_1 \in B_{m-k}} n_{\alpha_1} \sum_{\overline{\alpha_2} \subseteq \overline{\sigma} \tiny \tp \alpha_1 }n_{\alpha_2} \right |^2\\
& =  \frac{w(m)^4}{b_m^2}\sum_{k=0}^m \sum_{\sigma: \left |\sigma\right |=k} \left |\sum_{\alpha_1\in B_{m-k}}n_{\alpha_1}^2 n_{\sigma} \right |^2\\
&= \frac{w(m)^4}{b_m^2} \sum_{k=0}^m \sum_{\sigma: \left |\sigma\right |=k} b_{m-k}^2 n_{\sigma}^2\\
&= \frac{w(m)^4}{b_m^2} \sum_{k=0}^m  b_{m-k}^2 s_k \gtrsim \frac{w(m)^4}{b_m^2} \sum_{k=0}^m  (m-k)^{2\gamma_2}k^{\gamma_2-1}
\end{align*}
for any $m\in \n$. Hence, we obtain
\begin{align*}
\liminf_{m\rightarrow \infty} \frac{b_m^2}{m^{3\gamma_2}w(m)^4}& \gtrsim \lim_{m\rightarrow \infty}\sum_{k=1}^m (1-\frac{k}{m})^{2\gamma_2}(\frac{k}{m})^{\gamma_2-1}\frac{1}{m}\\
&=\int_{0}^1 (1-x)^{2\gamma_2}x^{\gamma_2-1}dx>0,
\end{align*}
so that $\displaystyle \limsup_{m\rightarrow \infty} m^{3\gamma_2-2\gamma_1 } w(m)^4 <\infty$.

\end{proof}

\begin{corollary}\label{cor-poly}
Let $\g$ be a compact matrix quantum group of Kac type whose dual satisfies
\begin{equation}
b_k\lesssim (1+k)^{\gamma}~\mathrm{and~} s_k\gtrsim (1+k)^{\gamma-1}.
\end{equation}
Then the following are equivalent:
\begin{enumerate}
\item For any $1<p\leq 2$, we have
\begin{equation}
\left (\sum_{\alpha\in \mathrm{Irr}(\g)}\frac{n_{\alpha}}{(1+|\alpha|)^{s(\frac{2}{p}-1)}}\norm{\widehat{f}(\alpha)}_{HS}^2 \right )^{\frac{1}{2}}\lesssim \norm{f}_{L^p(\g)}~for~all~f\in L^p(\g).
\end{equation}
\item For some $1<p\leq 2$, we have
\begin{equation}
\left (\sum_{\alpha\in \mathrm{Irr}(\g)}\frac{n_{\alpha}}{(1+|\alpha|)^{s(\frac{2}{p}-1)}}\norm{\widehat{f}(\alpha)}_{HS}^2\right )^{\frac{1}{2}}\lesssim \norm{f}_{L^p(\g)}~for~all~f\in L^p(\g).
\end{equation}
\item $s\geq \gamma$.
\end{enumerate}
\end{corollary}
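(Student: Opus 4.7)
The implication $(3)\Rightarrow (1)$ is a monotonicity argument: since $2/p-1\geq 0$ for $1<p\leq 2$, $s\geq \gamma$ makes the weight in $(1)$ dominated by that of $(\ref{ineq-poly2})$, which already holds under $b_k\lesssim (1+k)^{\gamma}$. The implication $(1)\Rightarrow (2)$ is immediate.

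For the substantive direction $(2)\Rightarrow (3)$, assume $(2)$ at some $p_0\in (1,2)$ with exponent $s$, write $\theta_0=1/p_0-1/2$, and let $M_{\sigma}$ denote the self-adjoint Fourier multiplier with symbol $(1+|\alpha|)^{-\sigma}$. The hypothesis is $\|M_{s\theta_0}(f)\|_{L^2(\g)}\lesssim \|f\|_{L^{p_0}(\g)}$ or, by duality, $\|M_{s\theta_0}(g)\|_{L^{p_0'}(\g)}\lesssim \|g\|_{L^2(\g)}$. The whole argument reduces to the endpoint $p_0=4/3$, where $p_0'=4$ and Theorem $\ref{thm-poly}$ applies to $w(m)=(1+m)^{-s/4}$ with $\gamma_1=\gamma_2=\gamma$ (both permitted by our hypotheses): this forces $\limsup_m(1+m)^{(\gamma-s)/4}<\infty$, hence $s\geq \gamma$.

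To pass from a general $p_0$ to this endpoint I split into two sub-cases. When $p_0\leq 4/3$, I would interpolate $\|M_{s\theta_0}\colon L^{p_0}\to L^2\|$ against the Plancherel identity $\|M_0\colon L^2\to L^2\|=1$ via the analytic family $T_z=M_{zs\theta_0}$; the boundary bounds on $\mathrm{Re}(z)=0,1$ are uniform in the imaginary direction because any phase multiplier $M_{iyc}$ is an $L^2$-isometry that commutes past the real-order factor. Picking $\eta=p_0/(2(2-p_0))\in[1/2,1]$ makes $p_\eta=4/3$, and the identity $\eta\theta_0=1/4$ shows that the interpolated operator is $M_{s/4}\colon L^{4/3}\to L^2$, reducing us to the endpoint case with the same $s$. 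When $p_0>4/3$ the previous interpolation only reaches $p\in[p_0,2]$, so I instead interpolate against the ``trivial'' bound $\|M_{\sigma}\colon L^1(\g)\to L^2(\g)\|<\infty$, which holds for every $\sigma>\gamma/2$ (combine $\|\widehat{f}(\alpha)\|_{HS}^2\leq n_{\alpha}\|f\|_{L^1(\g)}^2$ with Abel summation of $\sum_{\alpha}n_{\alpha}^2(1+|\alpha|)^{-2\sigma}$ against $b_k\lesssim (1+k)^{\gamma}$). Stein interpolation of $T_z=M_{(1-z)\sigma+zs\theta_0}$ yields an $L^{p_\eta}\to L^2$ bound; choosing $\eta=p_0/(4(p_0-1))\in (1/2,1)$ aligns $p_\eta=4/3$, so the endpoint case forces $(1-\eta)\sigma+\eta s\theta_0\geq \gamma/4$. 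Letting $\sigma\searrow \gamma/2$ and invoking the arithmetic identity $(2\eta-1)/(4\eta\theta_0)=1$ (verified by direct substitution of the closed forms of $\eta$ and $\theta_0$) returns $s\geq \gamma$.

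The delicate point is the Stein interpolation in the case $p_0>4/3$: the uniform boundary bounds on imaginary lines reduce to the observation that $M_{iyc}$ is an $L^2$-isometry, which is clean, but the numerical cancellation $(2\eta-1)/(4\eta\theta_0)=1$ at $p_\eta=4/3$ is the arithmetic miracle that upgrades the ostensibly weaker bound $s\geq \gamma(2\eta-1)/(4\eta\theta_0)$ to the sharp $s\geq \gamma$ independently of $p_0$.
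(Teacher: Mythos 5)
Your proof is correct and follows essentially the same route as the paper's: reduce to the $L^{4/3}\to L^2$ case by complex interpolation, dualize to an $L^2\to L^4$ multiplier bound, and invoke Theorem \ref{thm-poly} with $\gamma_1=\gamma_2=\gamma$; the exponent bookkeeping you carry out (including the identity $(2\eta-1)/(4\eta\theta_0)=1$) checks out. The only differences are cosmetic: you run Stein interpolation on the analytic family of multipliers where the paper interpolates the weighted vector-valued $\ell^2$ targets via (\ref{CI2}), and for $p>4/3$ you interpolate against an $L^1\to L^2$ bound with $\sigma>\gamma/2$ and let $\sigma\searrow\gamma/2$, where the paper instead uses the known inequality (\ref{ineq-poly2}) at an interior point $p_0\in(1,4/3)$.
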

\begin{proof}
(3) $\Rightarrow$ (1) $\Rightarrow$ (2) is clear from the inequality (\ref{ineq-poly2}). Let us prove (2) $\Rightarrow$ (3). In the case $1<p\leq \displaystyle \frac{4}{3}$, then the Fourier transform $\mathcal{F}:f\mapsto \widehat{f}$ satisfies 
\begin{equation}
\norm{\mathcal{F}}_{L^p(\g)\rightarrow \ell^2( \left \{H_{\alpha}\right\}_{\alpha\in \mathrm{Irr}(\g)},\mu)},\norm{\mathcal{F}}_{L^2(\g)\rightarrow \ell^2(\widehat{\g})}<\infty
\end{equation}
where $\displaystyle \mu(\alpha)=  (1+|\alpha|)^{-s(\frac{2}{p}-1)}$. Then, due to equality (2) in \cite{RiXu11} or (\ref{CI2}), we have
\begin{equation}
(\ell^2( \left \{H_{\alpha}\right\}_{\alpha\in \mathrm{Irr}(\g)},\mu),\ell^2(\widehat{\g}))_{1-\theta}=\ell^2( \left \{H_{\alpha}\right\}_{\alpha\in \mathrm{Irr}(\g)},\mu^{\theta})~\mathrm{for~all~}0<\theta<1.
\end{equation}

Therefore, we have $\displaystyle \norm{\mathcal{F} }_{L^{\frac{4}{3}}(\g)\rightarrow \ell^2( \left \{H_{\alpha}\right\}_{\alpha\in \mathrm{Irr}(\g)},\mu^{\frac{p}{2(2-p)}})}<\infty$ at $\theta=\displaystyle \frac{p}{2(2-p)}$ $(\Leftrightarrow \displaystyle \frac{\theta}{p}+\frac{1-\theta}{2}=\frac{3}{4})$, i.e.
\begin{equation}
\left ( \sum_{\alpha\in \mathrm{Irr}(\g)} \frac{n_{\alpha}}{(1+|\alpha|)^{\frac{s}{2}}} \norm{\widehat{f}(\alpha)}_{HS}^2 \right )^{\frac{1}{2}} \lesssim \norm{f}_{L^{\frac{4}{3}}(\g)}.
\end{equation}

Then the dual statement is
\begin{equation}
\norm{\sum_{\alpha\in \mathrm{Irr}(\g)}\frac{n_{\alpha}}{(1+|\alpha|)^{\frac{s}{4}}}\mathrm{tr}(\widehat{f}(\alpha)u^{\alpha})}_{L^4(\g)} \lesssim \norm{f}_{L^{2}(\g)}
\end{equation}
and we obtain $\displaystyle s\geq \gamma$ from Theorem \ref{thm-poly}.

In the case $\displaystyle \frac{4}{3}<p<2$, let us choose $\displaystyle p_0\in (1,\frac{4}{3})$. Then by the inequality (\ref{ineq-poly2}) and the given assumption, we have
\begin{equation}
\norm{\mathcal{F}}_{L^{p_0}(\g)\rightarrow \ell^2( \left \{H_{\alpha}\right\}_{\alpha\in \mathrm{Irr}(\g)},\mu^r)},\norm{\mathcal{F}}_{L^p(\g)\rightarrow \ell^2 (\left \{H_{\alpha}\right\}_{\alpha\in \mathrm{Irr}(\g)},\mu)}<\infty ,
\end{equation}
where $r=\displaystyle \frac{\gamma(\frac{2}{p_0}-1)}{s(\frac{2}{p}-1)}$. Then $\norm{\mathcal{F}}_{L^{\frac{4}{3}}(\g)\rightarrow \ell^2 (\left \{H_{\alpha}\right\}_{\alpha\in \mathrm{Irr}(\g)},\mu^{r\theta + (1-\theta)}) }<\infty$ for $\theta\in (0,1)$ satisfying $\displaystyle \frac{\theta}{p_0}+\frac{1-\theta}{p}=\frac{3}{4}$ by (\ref{CI2}) and the dual argument shows
\begin{equation}
\norm{\sum_{\alpha\in \mathrm{Irr}(\g)}\frac{n_{\alpha}}{(1+|\alpha|)^{s(\frac{1}{p}-\frac{1}{2})\cdot (r\theta+(1-\theta))}}\mathrm{tr}(\widehat{f}(\alpha)u^{\alpha})}_{L^4(\g)} \lesssim \norm{f}_{L^{2}(\g)}.
\end{equation}

Then it is easy to check that 
\begin{align*}
s(\frac{1}{p}-\frac{1}{2})\cdot (r\theta+1-\theta)&=\gamma(\frac{1}{p_0}-\frac{1}{2})\theta+s(\frac{1}{p}-\frac{1}{2})\cdot (1-\theta)\\
&=(s-\gamma)(\frac{1}{p}-\frac{1}{2})(1-\theta)+\gamma\cdot [(\frac{1}{p_0}-\frac{1}{2})\theta  + (\frac{1}{p}-\frac{1}{2})(1-\theta) ]\\
&=(s-\gamma)(\frac{1}{p}-\frac{1}{2})(1-\theta)+\gamma(\frac{3}{4}-\frac{1}{2})\\
&=(s-\gamma)(\frac{1}{p_0}-\frac{1}{2})(1-\theta)+\frac{\gamma}{4}
\end{align*}
and Theorem \ref{thm-poly} asserts that $\displaystyle (s-\gamma)(\frac{1}{p_0}-\frac{1}{2})(1-\theta)+\frac{\gamma}{4}\geq \frac{\gamma}{4}$. Hence, we can conclude that $s\geq \gamma$.
\end{proof}

\begin{example}
\begin{enumerate}
\item For any $1<p\leq 2$ sharp Sobolev embedding properties on duals of polynomially growing discrete groups are
\begin{equation}
\left ( \sum_{g\in \Gamma} \frac{|f(g)|^2}{(1+k)^{\gamma(\frac{2}{p}-1)}}\right )^{\frac{1}{2}}\lesssim \norm{\lambda(f)}_{L^p(\widehat{\Gamma})},
\end{equation}
where $\lambda(f)\sim \displaystyle \sum_{g\in \Gamma}f(g)\lambda_g\in L^p(\widehat{\Gamma})$ and $\gamma$ is the polynomial growth order of $\Gamma$.  For growth rates of $k$-spheres, see \cite[Corollary 11]{BrLe13}.
\item For any $1<p\leq 2$ sharp Sobolev embedding properties on $O_2^+$ or $S_4^+$ are
\begin{equation}
\left ( \sum_{k\geq 0} \frac{n_k}{(1+k)^{3(\frac{2}{p}-1)}}\norm{\widehat{f}(k)}_{HS}^2\right )^{\frac{1}{2}}\lesssim \norm{f}_{L^p(\g)}.
\end{equation}
\end{enumerate}
\end{example}

\section{Sobolev embedding properties under the rapid decay property}\label{sec-rd1}

Arguably, the most important examples of compact quantum groups are duals of free groups $\widehat{\mathbb{F}_N}$ and free quantum groups such as $O_{N+1}^+,S_{N+3}^+$, Since their discrete duals are exponentially growing if $N\geq 2$, the results from Section \ref{sec-poly} are not applicable.

The compact quantum groups $\widehat{\mathbb{F}_N},O_{N+1}^+,S_{N+3}^+$ with $N\geq 2$ have unique natures in view of analysis. Those are not co-amenable, the underlying $C^*$-algebras are non-nuclear, etc. One notable known result \cite[Theorem 3.2]{Yo18b} is that Hausdorff-Young inequalities can be improved in the case of free orthogonal quantum groups $O_N^+$ with $N\geq 3$.

The aim of this Section is
\begin{itemize}
\item (Theorem \ref{thm-rapid}) to generalize the proof of \cite[Theorem 3.2]{Yo18b} to general compact quantum groups whose duals have the rapid decay property and
\item (Theorem \ref{thm-main-rapid}) to combine Theorem \ref{thm-main-rapid} and \cite[Corollary 3.9]{Yo18a} to establish Sobolev embedding properties under the rapid decay property.
\end{itemize}

\subsection{Sharpened Hausdorff-Young inequalities}\label{sec-HY}

For general compact quantum groups, the Hausdorff-Young inequalities state that the Fourier transform $\mathcal{F}:L^p(\g)\rightarrow \ell^{p'}(\widehat{\g})$ is contractive for all $1\leq p\leq 2$. Moreover, boundedness of a multiplier
\begin{equation}
\mathcal{F}_w:L^p(\g)\rightarrow \ell^{p'}(\widehat{\g}),~f\mapsto (w(\alpha)\widehat{f}(\alpha))_{\alpha \in \mathrm{Irr}(\g)},
\end{equation}
implies boundedness of the sequence $w=(w(\alpha))_{\alpha \in \mathrm{Irr}(\g)}$ if $\g$ is one of the compact quantum groups listed below:
\begin{itemize}
\item connected semisimple compact Lie groups
\item duals of discrete groups $\Gamma$
\item free orthogonal quantum group $O_2^+$
\item quantum $SU(2)$ group
\end{itemize}

The above observation is motivated by \cite[Section 4]{Yo18b} and boundedness of $w$ can be explained by \cite[Main theorem]{GiTr80}, families of matrix elements $\left \{\lambda_g\right\}_{g\in \Gamma}$, $\left \{u^n_{0,0} \right\}_{n\geq 0}$ and $\left \{ u^n_{n,n} \right\}_{n\geq 0}$ respectively with respect to canonical choices of orthonormal bases.

Nevertheless, \cite[Theorem 3.2]{Yo18b} has established the existence of an unbounded (exponentially) increasing sequence for the case of $O_N^+$ with $N\geq 3$, and we will adapt the proof of \cite[Theorem 3.2]{Yo18b} to general compact matrix quantum groups under the rapid decay property. 

\begin{theorem}[A sharpened Hausdorff-Young inequality]\label{thm-rapid}
Let $\g$ be a compact matrix quantum group of Kac type whose dual $\widehat{\g}$ has the rapid decay property with $r_k\lesssim (1+k)^{\beta}$. Then for any $1<p\leq 2$ we have
\begin{equation}
\left (\sum_{k\geq 0}\frac{1}{(1+k)^{\beta(p'-2)}}(\sum_{\alpha\in S_k}n_{\alpha}\norm{\widehat{f}(\alpha)}_{S^2_{n_{\alpha}}}^2)^{\frac{p'}{2}} \right )^{\frac{1}{p'}}\lesssim \norm{f}_{L^p(\g)}~for~all~f\in L^p(\g).
\end{equation}
\end{theorem}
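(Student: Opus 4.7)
My plan is to set up two endpoint inequalities and then interpolate via the vector-valued complex interpolation formula (\ref{CI1}). For each $k\geq 0$ I introduce the Hilbert space $E_k := \bigoplus_{\alpha\in S_k} M_{n_\alpha}$ equipped with the norm
\[
\norm{x}_{E_k}^2 = \sum_{\alpha\in S_k} n_\alpha \norm{x_\alpha}_{S^2_{n_\alpha}}^2,
\]
and write $\widehat{f}_k := (\widehat{f}(\alpha))_{\alpha\in S_k}$. By the Plancherel theorem, $\norm{\widehat{f}_k}_{E_k}=\norm{p_k(f)}_{L^2(\g)}$, so the Fourier transform is an isometry $\mathcal{F}: L^2(\g)\to\ell^2(\{E_k\}_{k\geq 0},1)$; this is the $L^2$-endpoint.

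For the $L^1$-endpoint I combine rapid decay with duality. Since $p_k(g)\in H_k$ for every $g\in L^2(\g)$, the rapid decay assumption gives $\norm{p_k}_{L^2(\g)\to L^\infty(\g)}\leq C(1+k)^\beta$. Because $h$ is tracial and $p_k$ is self-adjoint on $L^2(\g)$, passing to the adjoint yields $\norm{p_k}_{L^1(\g)\to L^2(\g)}\leq C(1+k)^\beta$, i.e.\
\[
\sup_{k\geq 0}(1+k)^{-\beta}\norm{\widehat{f}_k}_{E_k}\leq C\norm{f}_{L^1(\g)}.
\]
Denoting by $\widetilde{E}_k$ the Hilbert space $E_k$ with norm rescaled by the factor $(1+k)^{-\beta}$, this is precisely the boundedness of $\mathcal{F}: L^1(\g)\to\ell^\infty(\{\widetilde{E}_k\})$.

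Finally, I apply (\ref{CI1}) to these two endpoints. On the source side, $(L^2(\g),L^1(\g))_\theta = L^p(\g)$ at $\theta=\frac{2-p}{p}\in(0,1)$, and $\frac{1}{p'}=\frac{1-\theta}{2}$. On the target side, both weights are $1$, so (\ref{CI1}) produces $\ell^{p'}(\{(E_k,\widetilde{E}_k)_\theta\},1)$. Since $E_k$ and $\widetilde{E}_k$ are the same underlying Hilbert space with norms of constant ratio $(1+k)^{-\beta}$, the standard scaling property of complex interpolation identifies $(E_k,\widetilde{E}_k)_\theta$ with $E_k$ rescaled by the factor $(1+k)^{-\beta\theta}$. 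The identity $\theta p'=p'-2$ (straightforward from $\theta=\frac{2-p}{p}$ and $p'=\frac{p}{p-1}$) then produces exactly the stated weight $(1+k)^{-\beta(p'-2)}$. The only nontrivial analytic input is the $L^1\to L^2$ norm bound for $p_k$ extracted from rapid decay; everything else is a bookkeeping verification of the interpolation setup.
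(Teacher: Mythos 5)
Your proposal is correct and follows essentially the same route as the paper: interpolating the Plancherel ($L^2$) endpoint against the rapid-decay $L^1\to\ell^\infty$ endpoint via the vector-valued complex interpolation formula, with the weight bookkeeping verified the same way (the identity $\theta p'=p'-2$ checks out). The only cosmetic differences are that you place the weight $(1+k)^{-\beta}$ in the fiber norms $\widetilde{E}_k$ and invoke the scaling property of interpolation, whereas the paper absorbs it into the map $\Phi$ and into the measure $\mu(k)=(1+k)^{2\beta}$, and that you derive the $L^1$ endpoint directly from rapid decay by duality rather than citing \cite[Proposition 3.7]{Yo18a}.
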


\begin{proof}
First of all, \cite[Proposition 3.7]{Yo18a} states that a linear map
\begin{equation}
\Phi:L^1(\g)\rightarrow \ell^{\infty}-(\left \{ H_k \right\}_{k\geq 0} )  , f\mapsto \bigoplus_{k\geq 0}\left (\frac{1}{(1+k)^{\beta}}\widehat{f}(\alpha)\right )_{\alpha\in S_k}
\end{equation}
is bounded and the Plancherel identity can be interpreted as that
\begin{equation}
\Phi:L^2(\g)\rightarrow \ell^2-(\left \{H_k\right\}_{k\geq 0},\mu)
\end{equation}
is an isometry where $\mu(k)=(1+k)^{2\beta}$. Since 
\begin{equation}
(\ell^{\infty}-(\left \{ H_k \right\}_{k\geq 0} ),\ell^2-(\left \{H_k\right\}_{k\geq 0},\mu))_{\theta}=\ell^{q}-(\left \{H_k\right\}_{k\geq 0},\mu) 
\end{equation}
where $\displaystyle \frac{1-\theta}{\infty}+\frac{\theta}{2}=\frac{1}{q}$ with $0<\theta<1$ \cite{Xu96}, the resulting inequality at $\theta=\displaystyle \frac{2}{p'}$ is
\begin{equation}
\left (\sum_{k\geq 0}\frac{1}{(1+k)^{\beta(p'-2)}}(\sum_{\alpha\in S_k}n_{\alpha}\norm{\widehat{f}(\alpha)}_{S^2_{n_{\alpha}}}^2)^{\frac{p'}{2}}\right )^{\frac{1}{p'}}\lesssim \norm{f}_{L^p(\g)}
\end{equation}
for all $1< p\leq 2$.

\end{proof}

\begin{remark}
The above Theorem \ref{thm-rapid} is sharp for $\g=\widehat{\mathbb{F}_N},O_{N+1}^+$ or $S_{N+3}^+$ with $N\geq 2$ by Corollary \ref{cor-application}.
\end{remark}

\begin{corollary}\label{cor-rapid}
Under the same assuption of Theorem \ref{thm-rapid}, for any $1<p\leq 2$, we have
\begin{equation}
\left (\sum_{\alpha\in \mathrm{Irr}(\g)}\frac{n_{\alpha}^{\frac{p'}{2}-1}}{(1+\left |\alpha \right |)^{\beta(p'-2)}}\cdot n_{\alpha}\norm{\widehat{f}(\alpha)}_{S^{p'}_{n_{\alpha}}}^{p'}\right )^{\frac{1}{p'}}\lesssim \norm{f}_{L^p(\g)}~for~all~f\in L^p(\g)
\end{equation}

\end{corollary}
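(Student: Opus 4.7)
The plan is to deduce this corollary directly from Theorem \ref{thm-rapid} by comparing the Schatten $S^{p'}_{n_\alpha}$-norm of each Fourier block $\widehat{f}(\alpha)$ with its $S^2_{n_\alpha}$-norm, and then collapsing the resulting sum over $\alpha\in S_k$ into a single sum that matches the left-hand side of Theorem \ref{thm-rapid}.

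The first step uses that $p'\geq 2$, so Schatten norms are decreasing in the index and $\norm{\widehat{f}(\alpha)}_{S^{p'}_{n_\alpha}}\leq \norm{\widehat{f}(\alpha)}_{S^2_{n_\alpha}}$. Raising to the $p'$-th power and introducing the factor $n_\alpha^{-p'/2}\cdot n_\alpha^{p'/2}$ gives
\[
n_\alpha\norm{\widehat{f}(\alpha)}_{S^{p'}_{n_\alpha}}^{p'}\leq n_\alpha^{1-p'/2}\bigl(n_\alpha\norm{\widehat{f}(\alpha)}_{S^{2}_{n_\alpha}}^{2}\bigr)^{p'/2}.
\]
The weight $n_\alpha^{p'/2-1}$ in the statement of Corollary \ref{cor-rapid} is precisely what is needed to cancel the factor $n_\alpha^{1-p'/2}$, so
\[
\frac{n_\alpha^{p'/2-1}}{(1+|\alpha|)^{\beta(p'-2)}}\,n_\alpha\norm{\widehat{f}(\alpha)}_{S^{p'}_{n_\alpha}}^{p'}\leq \frac{\bigl(n_\alpha\norm{\widehat{f}(\alpha)}_{S^{2}_{n_\alpha}}^{2}\bigr)^{p'/2}}{(1+|\alpha|)^{\beta(p'-2)}}.
\]

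The second step is to sum over $\alpha\in\mathrm{Irr}(\g)$, grouped by the length $k=|\alpha|$, and to apply the elementary inequality $\sum_i x_i^{q}\leq (\sum_i x_i)^{q}$, valid for $q=p'/2\geq 1$ and $x_i\geq 0$ (that is, the embedding $\ell^1\hookrightarrow \ell^{p'/2}$). This yields
\[
\sum_{\alpha\in S_k}\bigl(n_\alpha\norm{\widehat{f}(\alpha)}_{S^2_{n_\alpha}}^2\bigr)^{p'/2}\leq \Bigl(\sum_{\alpha\in S_k}n_\alpha\norm{\widehat{f}(\alpha)}_{S^2_{n_\alpha}}^2\Bigr)^{p'/2},
\]
and substituting back produces exactly the left-hand side of Theorem \ref{thm-rapid} after taking the $p'$-th root. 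Invoking Theorem \ref{thm-rapid} closes the argument. There is no real obstacle here: the only substantive point is the bookkeeping with the exponent of $n_\alpha$, which works out cleanly because $(1-p'/2)+(p'/2-1)=0$, and the two inequalities used (monotonicity of Schatten norms and $\ell^1\hookrightarrow \ell^{p'/2}$) both go in the favorable direction for the regime $1<p\leq 2$.
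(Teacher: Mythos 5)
Your proposal is correct and is essentially identical to the paper's own proof, which also reduces the corollary to the two inequalities $\norm{A}_{S^{p'}_n}\leq \norm{A}_{S^2_n}$ and $\sum_{\alpha\in S_k}\bigl(n_{\alpha}\norm{A(\alpha)}_{S^2_{n_{\alpha}}}^{2}\bigr)^{p'/2}\leq \bigl(\sum_{\alpha\in S_k}n_{\alpha}\norm{A(\alpha)}_{S^2_{n_{\alpha}}}^{2}\bigr)^{p'/2}$ before invoking Theorem \ref{thm-rapid}. Your write-up merely makes the exponent bookkeeping explicit; no new ideas or gaps.
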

\begin{proof}
It is enough to note that $\displaystyle \sum_{\alpha\in S_k}n_{\alpha}^{\frac{p'}{2}}\norm{A(\alpha)}_{S^2_{n_{\alpha}}}^{p'}\leq ( \sum_{\alpha\in S_k}n_{\alpha}\norm{A(\alpha)}_{S^2_{n_{\alpha}}}^{2} )^{\frac{p'}{2}}$ and $\norm{A}_{S^{p'}_n}\leq \norm{A}_{S^2_n}$.
\end{proof}

\begin{remark}
In view of Corollary \ref{cor-rapid}, if $\displaystyle \sup_{\alpha\in \mathrm{Irr}(\g)}\frac{n_{\alpha}}{(1+\left |\alpha \right |)^{2\beta}}=\infty$, we are able to find an unbounded sequence $w=(w(\alpha))_{\alpha\in \mathrm{Irr}(\g)}$ such that
\begin{equation}
L^p(\g)\rightarrow \ell^{p'}(\widehat{\g}),~f\mapsto (w(\alpha)\widehat{f}(\alpha))_{\alpha\in \mathrm{Irr}(\g)},~ is~bounded.
\end{equation}
This happens when $\g$ is one of the following:
\begin{itemize}
\item Free orthogonal quantum groups $O_N^+$ with $N\geq 3$;
\item Free unitary quantum groups $U_N^+$ with $N\geq 3$;
\item Quantum automorphism group $\g_{aut}(B,\psi)$ with a $\delta$-trace $\psi$ and $\mathrm{dim}(B)\geq 5$.
\end{itemize}
\end{remark}

\subsection{Sobolev embedding properties under the rapid decay property}

In this section, we will present Sobolev embedding properties under the rapid decay property by interpolating Theorem \ref{thm-rapid} and Hardy-Littlewood inequalities \cite[Theorem 3.8]{Yo18a}.

\begin{theorem}\label{thm-main-rapid}
Let $\g$ be a compact matrix quantum group of Kac type whose dual $\widehat{\g}$ has the rapid decay property with $r_k\lesssim (1+k)^{\beta}$. Then for any $1<p\leq 2$ we have
\begin{equation}
\left ( \sum_{\alpha\in \mathrm{Irr}(\g)} \frac{n_{\alpha}}{(1+|\alpha|)^{(2\beta+1) (\frac{2}{p}- 1 )}} \norm{\widehat{f}(\alpha)}_{HS}^2 \right )^{\frac{1}{2}}\lesssim \norm{f}_{L^p(\g)}~for~all~f\in L^p(\g).
\end{equation}
\end{theorem}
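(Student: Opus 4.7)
The plan is to derive the inequality by complex interpolation between the sharpened Hausdorff--Young inequality of Theorem \ref{thm-rapid} and the Hardy--Littlewood inequality of \cite[Theorem 3.8]{Yo18a}; this is exactly the route suggested in the introduction. Both ingredients can be put in the form of $L^p(\g)$-to-weighted-$\ell^r(\{H_k\})$ bounds for the Fourier transform over the common family of Banach spaces $H_k$, and interpolating between them in the target will land on $\ell^2(\{H_k\}, \nu)$ with the desired weight.

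First, I would repackage Theorem \ref{thm-rapid} as the boundedness of
$$
\mathcal{F} : L^p(\g) \To \ell^{p'}(\{H_k\}_{k\geq 0}, \mu_{p'}), \qquad \mu_{p'}(k) = (1+k)^{-\beta(p'-2)},
$$
where the $k$-th component of $\mathcal{F}(f)$ is the element of $H_k$ with Fourier coefficients $(\widehat{f}(\alpha))_{\alpha \in S_k}$; the match with the left-hand side of Theorem \ref{thm-rapid} is the sphere-by-sphere Plancherel identity $\|\mathcal{F}(f)_k\|_{H_k}^{2} = \sum_{\alpha \in S_k} n_\alpha \|\widehat{f}(\alpha)\|_{HS}^{2}$. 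In the same language, \cite[Theorem 3.8]{Yo18a} yields a Hardy--Littlewood companion
$$
\mathcal{F} : L^p(\g) \To \ell^{p}(\{H_k\}_{k\geq 0}, \mu_p),
$$
with an explicit weight $\mu_p(k)$ that is a negative power of $(1+k)$ whose exponent is controlled by the rapid-decay parameter $\beta$.

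Second, I would apply the vector-valued interpolation formula (\ref{CI1}) to these two endpoint bounds. Since the target family $\{H_k\}$ is the same on both sides, (\ref{CI1}) yields, for each $\theta \in (0,1)$,
$$
\mathcal{F} : L^p(\g) \To \ell^{r}(\{H_k\}_{k\geq 0}, \nu), \qquad \tfrac{1-\theta}{p'} + \tfrac{\theta}{p} = \tfrac{1}{r}, \quad \nu = \mu_{p'}^{\,r(1-\theta)/p'}\,\mu_{p}^{\,r\theta/p}.
$$
Because $1/p + 1/p' = 1$, the choice $r=2$ forces the midpoint $\theta = 1/2$, so the weight collapses to $\nu(k) = \mu_{p'}(k)^{1/p'}\,\mu_p(k)^{1/p}$. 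Unfolding the resulting $\ell^2(\{H_k\},\nu)$-norm by the sphere-by-sphere Plancherel identity turns it into $\bigl(\sum_{\alpha} n_\alpha \,\nu(|\alpha|)\,\|\widehat{f}(\alpha)\|_{HS}^{2}\bigr)^{1/2}$, which is exactly the shape of the target sum.

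The routine step is then the exponent arithmetic: using $(p'-2)/p' = 2/p - 1$ one obtains $\mu_{p'}^{1/p'}(k) = (1+k)^{-\beta(2/p-1)}$, and the prescribed exponent of $\mu_p$ supplies the missing factor $(1+k)^{-(\beta+1)(2/p-1)}$, so that $\nu(k) = (1+k)^{-(2\beta+1)(2/p-1)}$, matching the statement. The main obstacle is therefore the packaging step: recasting \cite[Theorem 3.8]{Yo18a} in the vector-valued $\ell^p(\{H_k\}, \mu_p)$-format compatible with Theorem \ref{thm-rapid}, so that its weight fits together with $\mu_{p'}$ through $\mu_{p'}^{1/p'}\mu_p^{1/p}$ to produce precisely the exponent $-(2\beta+1)(2/p-1)$; once this is in place, (\ref{CI1}) delivers Theorem \ref{thm-main-rapid} directly.
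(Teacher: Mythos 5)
Your proposal is correct and follows essentially the same route as the paper: the paper's proof also interpolates via (\ref{CI1}) between Theorem \ref{thm-rapid} and the vector-valued Hardy--Littlewood bound $\norm{\mathcal{F}}_{L^p(\g)\rightarrow \ell^p(\{H_k\}_{k\geq 0},\mu_0)}<\infty$ with $\mu_0(k)=(1+k)^{-(\beta+1)(2-p)}$, landing at $\theta=\tfrac12$ on the weight $\mu_0^{1/p}\mu_1^{1/p'}(k)=(1+k)^{-(2\beta+1)(\frac{2}{p}-1)}$. The ``packaging step'' you flag as the main obstacle is exactly what \cite[Corollary 3.9]{Yo18a} provides (the sphere-by-sphere reformulation of \cite[Theorem 3.8]{Yo18a}), and the weight you posit for it agrees with the one the paper uses.
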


\begin{proof}
By \cite[Corollary 3.9]{Yo18a} and Theorem \ref{thm-rapid}, the Fourier transform $\mathcal{F}:f\mapsto \widehat{f}$ satisfies
\begin{equation}
\norm{\mathcal{F}}_{L^p(\g)\rightarrow \ell^p-(\left \{H_k\right\}_{k\geq 0},\mu_0)},\norm{\mathcal{F}}_{L^p(\g)\rightarrow \ell^{p'}-(\left \{H_k\right\}_{k\geq 0},\mu_1)}<\infty
\end{equation}
where $\mu_0(k)=(1+k)^{-(\beta+1)(2-p)}$ and $\mu_1(k)=(1+k)^{-\beta(p'-2)}$. Hence, $\displaystyle (\mu_0^{\frac{1}{p}}\mu_1^{\frac{1}{p'}})(k)=(1+k)^{-(2\beta+1)(\frac{2}{p}-1)}$, so that we reach a conclusion.
\end{proof}

\begin{corollary}
Let $s\geq 2\beta +1$ and $\g$ be a compact matrix quantum group of Kac type whose dual has the rapid decay property with $r_k\lesssim (1+k)^{\beta}$. Suppose that there exists a standard non-commutative semigroup $(T_t)_{t>0}$ whose infinitesimal generator $L$ satisfies
\begin{equation}
L(u^{\alpha}_{i,j})=-l(\alpha)u^{\alpha}_{i,j}~and~l(\alpha)\sim |\alpha|.
\end{equation}
Then for any $1<p<q<\infty$ we have
\begin{equation}
\norm{(1-L)^{-s(\frac{1}{p}-\frac{1}{q})}(f)}_{L^q(\g)}\lesssim \norm{f}_{L^p(\g)}~for~all~f\in L^p(\g).
\end{equation}

\end{corollary}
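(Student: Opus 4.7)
The plan is to reduce the statement to the equivalence $(3)\Leftrightarrow(4)$ of Theorem \ref{thm-Xiao}: it suffices to verify condition (4) of that theorem for a single pair of exponents, since the equivalence will automatically propagate the Sobolev embedding to the whole range $1 < p < q < \infty$. The natural candidate pair is $(p, 2)$ for any fixed $p \in (1,2)$, because this is precisely where Theorem \ref{thm-main-rapid} supplies information.

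Concretely, I would first reinterpret Theorem \ref{thm-main-rapid} as an $L^p \to L^2$ bound for the spectral multiplier $(1-L)^{-s(1/p-1/2)}$. Since $(1-L)$ acts on $u^{\alpha}_{i,j}$ as multiplication by $1+l(\alpha)$, and since the hypothesis $l(\alpha) \sim |\alpha|$ entails $1 + l(\alpha) \approx 1 + |\alpha|$ (so that $(1+l(\alpha))^t \approx (1+|\alpha|)^t$ for every real $t$), the Plancherel identity yields
\[
\norm{(1-L)^{-s(\frac{1}{p}-\frac{1}{2})} f}_{L^2(\g)}^2 \approx \sum_{\alpha \in \mathrm{Irr}(\g)} \frac{n_{\alpha}}{(1+|\alpha|)^{s(\frac{2}{p}-1)}} \norm{\widehat{f}(\alpha)}_{HS}^2.
\]
For $s \geq 2\beta+1$ and $1 < p < 2$ one has $s(\frac{2}{p}-1) \geq (2\beta+1)(\frac{2}{p}-1)$, so the right-hand side is dominated term-by-term by the expression appearing in Theorem \ref{thm-main-rapid}, and hence by $\norm{f}_{L^p(\g)}^2$.

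This verifies condition (4) of Theorem \ref{thm-Xiao} at the pair $(p, 2)$ for any $p \in (1,2)$, and the implication $(4)\Rightarrow(3)$ immediately delivers the Sobolev embedding for every $1 < p < q < \infty$, which is the desired conclusion. The argument is essentially a packaging step combining two results already established in the excerpt, so no genuine obstacle is expected; the only piece of routine bookkeeping is the comparability $(1+l(\alpha))^t \approx (1+|\alpha|)^t$, which is straightforward from $l(\alpha) \sim |\alpha|$ together with the nonnegativity of both quantities.
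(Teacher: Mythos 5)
Your proof is correct and follows essentially the same route as the paper: the author likewise observes that, via Plancherel and $1+l(\alpha)\approx 1+|\alpha|$, Theorem \ref{thm-main-rapid} is exactly condition (4) of Theorem \ref{thm-Xiao} at the pair $(p,2)$ (with the term-by-term domination handling $s\geq 2\beta+1$), and then invokes the equivalence $(4)\Rightarrow(3)$ to cover all $1<p<q<\infty$. No discrepancies to report.
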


\begin{proof}
It is enough to note that
\begin{equation}\label{ineq-Sobolev-Poisson}
\left ( \sum_{\alpha\in \mathrm{Irr}(\g)} \frac{n_{\alpha}}{(1+|\alpha|)^{s(\frac{2}{p}- 1 )}} \norm{\widehat{f}(\alpha)}_{HS}^2 \right )^{\frac{1}{2}}\approx \norm{(1-L)^{-s(\frac{1}{p}-\frac{1}{2})}(f)}_{L^2(\g)}.
\end{equation}
Then Theorem \ref{thm-main-rapid} and Theorem \ref{thm-Xiao} complete the proof.

\end{proof}

\section{Rapid decay degree of discrete quantum groups}\label{sec-rd2}

The rapid decay degree, which was suggested in \cite{Ni10}, is a way to quantify the rapid decay property of a discrete group, and the notion naturally extends to the framework of duals of compact matrix quantum groups of Kac type. A natural way is to define the degree of rapid decay property $\mathrm{rd}(\widehat{\g})$ as the infimum of positive numbers $s\geq 0$ satisfying
\begin{equation}
\norm{f}_{L^{\infty}(\g)}\lesssim \left ( \sum_{\alpha\in \mathrm{Irr}(\g)}(1+|\alpha|)^{2s}n_{\alpha}\norm{\widehat{f}(\alpha)}_{HS}^2 \right )^{\frac{1}{2}}~\mathrm{for~all~}f\in \mathrm{Pol}(\g).
\end{equation}
Note that this definition is independent of the choice of a generating unitary representation.

For discrete groups, it has turned out that $\mathrm{rd}(\Gamma)=\displaystyle \frac{\gamma}{2}$ for any finitely generated discrete group $\Gamma$ with the polynomial growth order $\gamma$ \cite{Ni10} and that $\mathrm{rd}(\Gamma)=\displaystyle \frac{3}{2}$ for any non-elementary hyperbolic groups \cite{Ni17}. 

As quantum analogues of the above results, we aim to compute the rapid decay degree of polynomially growing discrete quantum groups and duals of free quantum groups $\widehat{O_N^+},\widehat{S_N^+}$. By theorem \ref{thm-poly-ultra}, we are already ready to extend \cite[Proposition 2.2 (2)]{Ni10}.

\begin{proposition}
Let $\g$ be a compact matrix quantum group of Kac type whose dual satisfies $b_k\approx (1+k)^{\gamma}$. Then
\begin{equation}
\norm{\sum_{\alpha\in \mathrm{Irr}(\g)}\frac{n_{\alpha}}{(1+|\alpha|)^s}\mathrm{tr}(\widehat{f}(\alpha)u^{\alpha})}_{L^{\infty}(\g)}\lesssim \norm{f}_{L^2(\g)}~for~all~f\in L^2(\g)
\end{equation}
if and only if $\displaystyle s>\frac{\gamma}{2}$. In particular, $\mathrm{rd}(\widehat{\g})=\displaystyle \frac{\gamma}{2}$.
\end{proposition}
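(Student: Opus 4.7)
The plan is to reformulate the claimed bound as the statement that the Fourier multiplier $M_s$ with symbol $(1+|\alpha|)^{-s}$ on $\widehat{\g}$ extends to a bounded map $L^2(\g)\to L^\infty(\g)$, and to use Theorem \ref{thm-poly-ultra} to reduce this operator-theoretic question to the scalar condition
\[
\sum_{\alpha\in \mathrm{Irr}(\g)} n_\alpha^2 (1+|\alpha|)^{-2s} \;=\; \sum_{k\geq 0} s_k (1+k)^{-2s} \;<\; \infty.
\]
Granting this equivalence, the two-sided growth hypothesis $b_k\approx (1+k)^\gamma$ combined with Abel summation will pin down the threshold at $s = \gamma/2$, as in the proof of Corollary \ref{cor-poly-ultra}.

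For the sufficient direction, Theorem \ref{thm-poly-ultra}(1) with $e^{-w(\alpha)} = (1+|\alpha|)^{-s}$ reduces the claim to $C_w < \infty$. Summing by parts exactly as in Corollary \ref{cor-poly-ultra},
\[
C_w \;\lesssim\; \sum_{k\geq 0} b_k \bigl[(1+k)^{-2s}-(2+k)^{-2s}\bigr] \;\lesssim\; \sum_{k\geq 0}(1+k)^{\gamma-2s-1},
\]
which is finite precisely when $s>\gamma/2$. For the converse, the polynomial growth of $\widehat{\g}$ guarantees that $\g$ is co-amenable by \cite[Proposition 2.1]{BaVe09}, making Theorem \ref{thm-poly-ultra}(2) available. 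Because $(1+|\alpha|)^{-s}$ is a real scalar symbol, $M_s$ is self-adjoint on $L^2(\g)$, and standard multiplier duality gives $\|M_s:L^1(\g)\to L^2(\g)\| = \|M_s:L^2(\g)\to L^\infty(\g)\|$. Composing the two bounds yields $M_{2s} = M_s\circ M_s : L^1(\g)\to L^\infty(\g)$ bounded, so applying Theorem \ref{thm-poly-ultra}(2) with $e^{-2w(\alpha)}=(1+|\alpha|)^{-2s}$ produces $\sum_k s_k(1+k)^{-2s}<\infty$. Running the same Abel summation with the lower bound $b_k \gtrsim (1+k)^{\gamma}$ now gives $\sum_k(1+k)^{\gamma-2s-1}<\infty$, hence $s>\gamma/2$.

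For the final assertion, the substitution $\widehat{g}(\alpha) = (1+|\alpha|)^s\widehat{f}(\alpha)$ converts the defining inequality of $\mathrm{rd}(\widehat{\g})$ into $\|M_s g\|_{L^\infty(\g)}\lesssim \|g\|_{L^2(\g)}$, so the first half of the proposition identifies the set of admissible exponents as $(\gamma/2,\infty)$, giving $\mathrm{rd}(\widehat{\g})=\gamma/2$. The only real technical point is the two-sided Abel-summation comparison $\sum_k s_k(1+k)^{-2s}\approx \sum_k (1+k)^{\gamma-2s-1}$; the multiplier duality and the appeal to Theorem \ref{thm-poly-ultra} then carry all of the analytic content.
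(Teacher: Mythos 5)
Your proposal is correct and follows essentially the same route as the paper: both directions reduce to Theorem \ref{thm-poly-ultra} with $e^{-w(\alpha)}=(1+|\alpha|)^{-s}$, followed by Abel summation against $b_k\approx(1+k)^{\gamma}$ to locate the threshold $s=\gamma/2$. The only difference is one of explicitness: you spell out the self-adjoint-multiplier duality and the composition $M_{2s}=M_s\circ M_s:L^1(\g)\to L^\infty(\g)$ needed to invoke Theorem \ref{thm-poly-ultra}(2) from an $L^2\to L^\infty$ hypothesis (and likewise the substitution $\widehat{g}(\alpha)=(1+|\alpha|)^{s}\widehat{f}(\alpha)$ for the $\mathrm{rd}$ claim), steps the paper leaves implicit here but which mirror the argument already used in Corollary \ref{cor-poly-ultra}.
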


\begin{proof}
First of all, we take a positive function $w(\alpha)=\displaystyle \log ( (1+|\alpha|)^s )$ with $\displaystyle  s > \frac{\gamma}{2}$ to apply Theorem \ref{thm-poly-ultra} (1). Then
\begin{align*}
C_w&=\sum_{\alpha \in \mathrm{Irr}(\g)}\frac{n_{\alpha}^2}{(1+|\alpha|)^{2s}}\\
&=\lim_{n\rightarrow \infty}\frac{b_n}{(1+n)^{2s}}+\sum_{k=0}^{\infty}b_k  \left (\frac{1}{(1+k)^{2s}}-\frac{1}{(2+k)^{2s}}\right )  \\
&\lesssim \sum_{k=0}^{\infty} (1+k)^{\gamma-(2s+1)} <\infty.
\end{align*}

On the other hand, if we assume the following inequality
\begin{equation}
\norm{\sum_{\alpha\in \mathrm{Irr}(\g)}\frac{n_{\alpha}}{(1+|\alpha|)^s}\mathrm{tr}(\widehat{f}(\alpha)u^{\alpha})}_{L^{\infty}(\g)}\lesssim \norm{f}_{L^2(\g)},
\end{equation}
then Theorem \ref{thm-poly-ultra} (2) implies $\displaystyle  \sum_{k=0}^{\infty}(2+k)^{\gamma-(2s+1)}\lesssim \sum_{\alpha\in \mathrm{Irr}(\g)}\frac{n_{\alpha}^2}{(1+|\alpha|)^{2s}}<\infty$ as in the above. Therefore, $\displaystyle s>\frac{\gamma}{2}$.
\end{proof}

In the case that $\widehat{\g}$ is exponentially growing, the following approach is valid under the rapid decay property. The proof relies on standard arguments that have been already used in \cite{Ni10,Br12,JuPaPaPe17,FrHoLeUlZh17}.

\begin{proposition}\label{prop-rd}
Let $\g$ be a compact matrix quantum group of Kac type whose dual has the rapid decay property with $r_k\lesssim (1+k)^{\beta}$ and let $w:\left \{0\right\}\cup \n\rightarrow (0,\infty)$ be a positive function such that $\displaystyle C_w=\sum_{k\geq 0}\frac{(1+k)^{2\beta}}{e^{2w(k)}}<\infty$. Then we have
\begin{equation}
\norm{\sum_{\alpha\in \mathrm{Irr}(\g)}\frac{n_{\alpha}}{e^{w(|\alpha|)}}\mathrm{tr}(\widehat{f}(\alpha)u^{\alpha})}_{L^{\infty}(\g)} \leq \sqrt{C_w} \norm{f}_{L^2(\g)}~for~all~f\in L^2(\g).
\end{equation}

In particular, for any $s>\displaystyle \beta+\frac{1}{2}$ we have
\begin{equation}
\norm{\sum_{\alpha\in \mathrm{Irr}(\g)}\frac{n_{\alpha}}{(1+|\alpha|)^{s}}\mathrm{tr}(\widehat{f}(\alpha)u^{\alpha})}_{L^{\infty}(\g)} \lesssim \norm{f}_{L^2(\g)}~for~all~f\in L^2(\g).
\end{equation}
\end{proposition}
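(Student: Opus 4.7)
The plan is to decompose $f \in L^2(\g)$ along the spectral projections $p_k$ onto the $k$-spheres, apply the rapid decay hypothesis sphere by sphere, and then glue the pieces together with the Cauchy--Schwarz inequality. Write $f = \sum_{k\geq 0} p_k(f)$ in $L^2(\g)$ and observe that
\[
g := \sum_{\alpha\in \mathrm{Irr}(\g)}\frac{n_{\alpha}}{e^{w(|\alpha|)}}\mathrm{tr}(\widehat{f}(\alpha)u^{\alpha}) = \sum_{k\geq 0} \frac{1}{e^{w(k)}} p_k(f),
\]
so each summand lies in $H_k$.

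The first key step is to apply the rapid decay assumption $r_k \lesssim (1+k)^\beta$ to each piece, which yields
\[
\|p_k(f)\|_{L^\infty(\g)} \lesssim (1+k)^\beta \|p_k(f)\|_{L^2(\g)}.
\]
Then the triangle inequality in $L^\infty(\g)$ together with Cauchy--Schwarz on the index $k$ gives
\[
\|g\|_{L^\infty(\g)} \;\lesssim\; \sum_{k\geq 0} \frac{(1+k)^{\beta}}{e^{w(k)}} \|p_k(f)\|_{L^2(\g)} \;\leq\; \Bigl( \sum_{k\geq 0} \frac{(1+k)^{2\beta}}{e^{2w(k)}} \Bigr)^{\frac12} \Bigl( \sum_{k\geq 0} \|p_k(f)\|_{L^2(\g)}^2 \Bigr)^{\frac12}.
\]
The first factor is $\sqrt{C_w}$ by hypothesis, and the second factor is $\|f\|_{L^2(\g)}$ by the Plancherel identity applied to the orthogonal decomposition $L^2(\g) = \bigoplus_{k\geq 0} H_k$.

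For the particular case, take $w(k) = s\log(1+k)$, so that $e^{w(k)} = (1+k)^s$. Then
\[
C_w = \sum_{k\geq 0} (1+k)^{2\beta - 2s},
\]
which is finite precisely when $2s - 2\beta > 1$, i.e.\ $s > \beta + \tfrac12$, and the second conclusion follows from the first.

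I do not anticipate a serious technical obstacle: the argument is just a weighted $\ell^1$--$\ell^2$ duality after slicing by length. The only minor care point is to make sure the constant absorbed by the $\lesssim$ in the rapid decay inequality is pulled outside the sum before the Cauchy--Schwarz step, so that it appears only as a multiplicative prefactor and does not affect the summability threshold in the specialization to $w(k) = s\log(1+k)$.
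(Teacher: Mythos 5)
Your proposal is correct and follows essentially the same route as the paper: decompose $f$ by the sphere projections $p_k$, apply the rapid decay bound $\|p_k(f)\|_{L^\infty(\g)}\lesssim (1+k)^\beta\|p_k(f)\|_{L^2(\g)}$ termwise, and conclude by Cauchy--Schwarz and orthogonality, with the specialization $w(k)=s\log(1+k)$ giving the threshold $s>\beta+\tfrac12$. Your remark about tracking the implicit rapid decay constant is apt, since the stated bound $\sqrt{C_w}\|f\|_{L^2(\g)}$ really carries that constant as a prefactor, exactly as in the paper's own (equally loose) display.
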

\begin{proof}
It is enough to see that for any $f\in \mathrm{Pol}(\g)$
\begin{align*}
\norm{\sum_{\alpha\in \mathrm{Irr}(\g)}\frac{n_{\alpha}}{e^{w(|\alpha|)}}\mathrm{tr}(\widehat{f}(\alpha)u^{\alpha})}_{L^{\infty}(\g)}&\leq \sum_{k\geq 0}\frac{\norm{p_k(f)}_{L^{\infty}(\g)}}{e^{w(|\alpha|)}}\\
&\lesssim \sum_{k\geq 0}\frac{(1+k)^{\beta}\norm{p_k(f)}_{L^2(\g)}}{e^{w(|\alpha|)}}\\
&\leq \left ( \sum_{k\geq 0}\frac{(1+k)^{2\beta}}{e^{2w(|\alpha |)}} \right )^{\frac{1}{2}}\norm{f}_{L^2(\g)}.
\end{align*}

\end{proof}

From now on, let us try to detect the rapid decay degree of duals of free quantum groups.

\begin{theorem}\label{thm-rd}
Let $\g$ be a compact matrix quantum group of Kac type and $w:\left \{0\right\}\cup \n \rightarrow (0,\infty)$ be a positive function. If we suppose that
\begin{equation}
\norm{\sum_{\alpha\in \mathrm{Irr}(\g)}\frac{n_{\alpha}}{e^{w(|\alpha |)}}\mathrm{tr}(\widehat{f}(\alpha)u^{\alpha})}_{L^{\infty}(\g)}\leq C \norm{f}_{L^2(\g)}~for~all~f\in L^2(\g),
\end{equation}
then there exists a universal constant $K>0$ such that $\displaystyle \sum_{k\geq 0}\frac{(1+k)^2}{e^{2w(k)}}\leq KC^2$ if $\g$ is one of the following:
\begin{itemize}
\item duals of non-elementary hyperbolic broups;
\item free orthogonal quantum groups $O_N^+$ with $N\geq 2$;
\item free permutation quantum groups $S_N^+$ with $N\geq 4$.
\end{itemize}
\end{theorem}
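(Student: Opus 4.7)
The plan is to reduce the bound to a one-variable classical inequality by restricting $T_w$ to a commutative character subalgebra on which it acts as a diagonal multiplier, and then to evaluate at an edge of the spectrum where the character values realize the rapid decay saturation.

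For the free quantum groups $\g \in \{O_N^+, S_{N+2}^+\}$ with $N \ge 2$, the fusion ring is of $SU(2)$ type, so after an affine change of variables (take $\zeta = \chi_1$ for $O_N^+$ and $\zeta = \chi_1 - 1$ for $S_{N+2}^+$) the irreducible characters satisfy the Chebyshev recurrence $\chi_{k+1} = \zeta\chi_k - \chi_{k-1}$, whence $\chi_k = U_k(\zeta/2)$. The $C^*$-subalgebra $\mathcal{R} = C^*(\zeta) \subseteq L^\infty(\g)$ is therefore commutative and, by Gelfand, is isomorphic to $C([-2,2])$; the Haar state restricts to integration against the spectral measure $\mu$ of $\zeta$ with $\mathrm{supp}(\mu) = [-2,2]$, and $\{\chi_k\}_{k \ge 0}$ is an orthonormal system in $L^2([-2,2],\mu)$. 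Since $T_w$ acts diagonally on $\mathcal{R}$ by $\chi_k \mapsto e^{-w(k)}\chi_k$, the hypothesis restricted to $\mathcal{R}$ reads, for every finitely supported $(a_k)$,
\[
\Bigl\| \sum_k a_k\, e^{-w(k)} \chi_k \Bigr\|_{L^\infty([-2,2],\mu)} \le C\Bigl(\sum_k |a_k|^2\Bigr)^{1/2}.
\]
Evaluating the essential supremum at the boundary point $x_0 = 2 \in \mathrm{supp}(\mu)$ and using $\chi_k(2) = U_k(1) = k+1$, then optimizing over $(a_k)$ by Cauchy--Schwarz (the extremizer being $a_k = e^{-w(k)}(k+1)$), yields $\sum_k (1+k)^2 e^{-2w(k)} \le C^2$, settling this case with $K = 1$.

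For duals of non-elementary hyperbolic groups $\Gamma$, the sphere sums $\eta_k = \sum_{|g|=k}\lambda_g$ are orthogonal in $L^2(\g)$ but do not commute in general, so the clean restriction above is unavailable. My plan is to replace pointwise evaluation by a vector-state evaluation: find a unit vector $\xi \in L^2(\g)$ and, for each $k$, an element $\psi_k \in H_k$ (built from geodesic segments in the Cayley graph along the lines of Nica's argument \cite{Ni17}) such that $|\langle\psi_k \xi, \xi\rangle| \gtrsim (1+k)\|\psi_k\|_{L^2(\g)}$ and such that the rank-one pairings $\langle \psi_k\xi, \psi_m\xi\rangle$ are essentially diagonal in $k,m$. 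Testing the hypothesis on $f = \sum_k a_k\psi_k$ and bounding $\|T_w(f)\|_{L^\infty(\g)}$ from below by the matrix coefficient $|\langle T_w(f)\xi, \xi\rangle|$ reproduces the Cauchy--Schwarz optimization above and yields $\sum_k (1+k)^2 e^{-2w(k)} \le KC^2$. The main obstacle is exactly this combinatorial construction in the hyperbolic case: commutativity of $\mathcal{R}$ automatically diagonalizes the cross-terms in the free quantum group case, while for hyperbolic $\Gamma$ one needs the Gromov thin-triangle property to force the off-diagonal pairings to vanish, which is the technical heart of Nica's analysis and would be invoked as a black box.
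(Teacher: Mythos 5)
Your treatment of the free quantum group cases is essentially correct and takes a genuinely more direct route than the paper. The paper invokes \cite[Lemma 4.7]{Yo18a} to transfer central elements isometrically to $SU(2)$ (resp.\ $SO(3)$), regularizes with the Poisson kernel $\mu_t$, applies the dual form $\norm{T_w(g)}_{L^2}\leq C\norm{g}_{L^1}$ to $\widetilde{\mu_t}$, and lets $t\to 0^+$; you instead work inside the commutative $C^*$-algebra generated by the (shifted) character and evaluate at the edge point $2$ of its spectrum. These are two incarnations of the same fact --- evaluation at $2$ is evaluation at the identity element of $SU(2)$ or $SO(3)$, and the $t\to 0^+$ limit is a regularized version of that evaluation --- but your version avoids the semigroup entirely and yields $K$ essentially equal to $1$. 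Two repairs are needed. First, for $S_{N+2}^+$ the characters satisfy the Chebyshev recurrence with initial data $\chi_0=1$, $\chi_1=\zeta+1$, so $\chi_k\neq U_k(\zeta/2)$; instead $\chi_k(2)=2k+1$, which is still $\gtrsim 1+k$ and suffices. Second, you must actually justify that $2\in\mathrm{supp}(\mu)=\mathrm{spec}(\zeta)$, since otherwise the evaluation at $x_0=2$ does not bound the $L^\infty(\g)$-norm from below; this holds because the law of $\zeta$ with respect to the Haar state is determined by the fusion rules and character orthogonality and coincides with the semicircle law on $[-2,2]$ (equivalently, with the law of the normalized character of $SU(2)$ or $SO(3)$), and this is exactly where the hypotheses $N\geq 2$ for $O_N^+$ and $N\geq 4$ for $S_N^+$ enter.

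The hyperbolic case, however, is a genuine gap: you present a plan, not a proof, and the plan as stated would not close even granting the deferred construction. The paper's argument is a one-line application of the main theorem of \cite{Ni17}, which gives the two-sided estimate $\norm{\sum_{k\geq 0}(a_k/\sqrt{s_k})\,\sigma_k}_{L^{\infty}(\widehat{\Gamma})}\approx\sum_{k\geq 0}(k+1)a_k$ for all nonnegative sequences $(a_k)$, where $\sigma_k=\sum_{|g|=k}\lambda_g$; applying this with $a_k$ replaced by $a_ke^{-w(k)}$ and optimizing over $(a_k)\in\ell^2$ gives $\sum_k(1+k)^2e^{-2w(k)}\lesssim C^2$ directly. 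Your proposed substitute asks for $|\la\psi_k\xi,\xi\ra|\gtrsim(1+k)\norm{\psi_k}_{L^2(\g)}$ together with ``essentially diagonal'' cross-pairings, but these conditions only control each sphere separately: to obtain a lower bound for $|\la T_w(f)\xi,\xi\ra|=|\sum_k a_ke^{-w(k)}\la\psi_k\xi,\xi\ra|$ that sums over $k$, you need the diagonal terms $\la\psi_k\xi,\xi\ra$ to be uniformly positive (coherent in phase) across all $k$, not merely large in modulus; otherwise cancellation can destroy the sum. That coherence is precisely the content of the lower bound in Nica's radial-multiplier formula, so either you cite that formula directly (as the paper does) or you are re-proving it, and the ``black box'' you defer to is in fact the entire argument.
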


\begin{proof}
First of all, let $\Gamma$ be a non-elementary hyperbolic group and $\displaystyle \sigma_k=\sum_{g\in S_k} \lambda_g$. Then for any positive sequence $(a_k)_{k \geq 0}$ the main theorem in \cite{Ni17} states that 
\begin{equation}
\norm{\sum_{k\geq 0} \frac{a_k}{\sqrt{s_k}}\sigma_k}_{L^{\infty}(\widehat{\Gamma})}\approx \sum_{k\geq 0}(k+1)a_k.
\end{equation}
Therefore, from the given assumption, we have
\begin{align*}
C(\sum_{k\geq 0}a_k^2)^{\frac{1}{2}}&=C\norm{\sum_{k\geq 0}\frac{a_k}{\sqrt{s_k}}\sigma_k}_{L^2(\widehat{\Gamma})}\\
&\geq \norm{\sum_{k\geq 0}\frac{a_ke^{-w(k)}}{\sqrt{s_k}}\sigma_k}_{L^{\infty}(\widehat{\Gamma})}\approx \sum_{k\geq 0}a_k (k+1)e^{-w(k)}.
\end{align*}
Since $(a_k)_{k\geq 0}$ is arbitrary, we have $\displaystyle \sum_{k\geq 0}(k+1)^2e^{-2w(k)}\lesssim C^2$.

Secondly, let $\g$ be $O_N^+$ (resp. $S_{N+2}^+$) with $N\geq 2$. Then for the associated compact Lie group $SU(2)$ (resp. $SO(3)$), \cite[Lemma 4.7]{Yo18a} states that for any $1\leq p\leq \infty$ and any $f\sim \displaystyle \sum_{k\geq 0}a_k\chi_k$, we have $\norm{f}_{L^p(\g)}=\norm{\widetilde{f}}_{L^p(SU(2))~(\mathrm{resp.~}L^p(SO(3)))}$ where $\widetilde{f}\sim\displaystyle \sum_{k\geq 0}a_k\widetilde{\chi_k}$ and $\widetilde{\chi_k}$ is the $k$-th character of $SU(2)$ (resp. $SO(3)$). Also, note that there exists the Poisson semigroup $(\mu_t)_{t>0}\subseteq L^1(SU(2))$ (resp. $L^1(SO(3))$) satisfying $\mu_t\sim \displaystyle \sum_{k\geq 0} e^{-t\kappa_{k}^{\frac{1}{2}}} (k+1) \widetilde{\chi_k}$ (resp. $\mu_t\sim \displaystyle \sum_{k\geq 0} e^{-t\kappa_{k}^{\frac{1}{2}}} (2k+1) \widetilde{\chi_k}$).

Now, from the given assumption and \cite[Lemma 6.3 (2)]{Yo18a}, we have
\begin{align*}
\sum_{k\geq 0} e^{-t \kappa_k^{\frac{1}{2}}}e^{-2w(k)}(1+k)^2&\approx \norm{\sum_{k\geq 0}e^{-t \kappa_k^{\frac{1}{2}}}e^{-2w(k)}(1+k)\widetilde{\chi_{k}}}_{L^{\infty}(SU(2))~(\mathrm{resp.}L^{\infty}(SO(3)))}\\
&=\norm{\sum_{k\geq 0}e^{-t \kappa_k^{\frac{1}{2}}}e^{-2w(k)}(1+k) \chi_{k}}_{L^{\infty}(\g)}\\
&\leq C \norm{\sum_{k\geq 0}e^{-t \kappa_k^{\frac{1}{2}}}e^{-w(k)}(1+k) \chi_{k}}_{L^2(\g)}\\
&\leq C^2\norm{\widetilde{\mu_t}}_{L^1(\g)}=  C^2 \norm{\mu_t}_{L^1(SU(2))~(\mathrm{resp.}L^1(SO(3)))}=C^2
\end{align*}

Lastly, by taking the limit $t\rightarrow 0^+$, we get $\displaystyle \sum_{k\geq 0}e^{-2w(k)}(1+k)^2\leq K C^2$ for a universal constant $K>0$.
\end{proof}

By combining Proposition \ref{prop-rd} and Theorem \ref{thm-rd}, we can compute the rapid decay degrees of $\widehat{O_N^+}$ and $\widehat{S_N^+}$.

\begin{corollary}\label{cor-rd}
Let $\g$ be $O_N^+$ or $S_{N+2}^+$ with $N\geq 2$ and $s\geq 0$. Then
\begin{equation}\label{eq-1}
\norm{\sum_{k\geq 0}\frac{n_{k}}{(1+k)^s}\mathrm{tr}(\widehat{f}(k)u^{k})}_{L^{\infty}(\g)}\lesssim \norm{f}_{L^2(\g)}~for~all~f\in L^2(\g)
\end{equation}
if and only if $s>\displaystyle \frac{3}{2} $. In particular, $\mathrm{rd}(\widehat{\g})=\displaystyle \frac{3}{2}$.
\end{corollary}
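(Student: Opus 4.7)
The corollary combines the sufficient condition of Proposition~\ref{prop-rd} with the necessary condition of Theorem~\ref{thm-rd}, once we pin down the rapid decay exponent $\beta$ for the two families.

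The first step is to verify that $\widehat{\g}$ has the rapid decay property with $r_k\lesssim (1+k)^{\beta}$ for $\beta=1$. When $N\geq 3$ this is recorded in the preliminary subsection on $O_N^+$ and $S_N^+$ (following \cite{Ve07,BaVe09}). When $N=2$, each $k$-sphere consists of a single irreducible $u^{k}$ of classical dimension $n_k=k+1$, so for any $f\in H_k$ one has $\norm{f}_{L^\infty(\g)}\leq n_k\norm{\widehat{f}(k)}_{S^1_{n_k}}\leq n_k\norm{p_k(f)}_{L^2(\g)}=(1+k)\norm{f}_{L^2(\g)}$, giving $\beta=1$ again. Thus in all four cases $\beta=1$.

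The sufficiency ($s>3/2\Rightarrow$ (\ref{eq-1})) follows immediately: Proposition~\ref{prop-rd} with $\beta=1$ states that for any $s>\beta+\tfrac{1}{2}=\tfrac{3}{2}$,
\begin{equation*}
\norm{\sum_{k\geq 0}\frac{n_k}{(1+k)^s}\mathrm{tr}(\widehat{f}(k)u^{k})}_{L^{\infty}(\g)}\lesssim \norm{f}_{L^2(\g)}.
\end{equation*}
For the converse, suppose (\ref{eq-1}) holds for some $s\geq 0$. Apply Theorem~\ref{thm-rd} with the positive function $w(k)=s\log(1+k)$ (so $e^{-w(k)}=(1+k)^{-s}$); the theorem applies since $\g$ is on the listed family ($O_N^+$, $N\geq 2$, or $S_{N+2}^+$, $N\geq 2$, i.e.\ $S_N^+$, $N\geq 4$). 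Its conclusion asserts $\sum_{k\geq 0}(1+k)^{2-2s}<\infty$, which forces $2-2s<-1$, i.e.\ $s>\tfrac{3}{2}$. This proves the equivalence in the displayed statement.

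For the final assertion about the rapid decay degree, a standard duality argument converts (\ref{eq-1}) with weight $(1+k)^{-s}$ into the Hardy-type estimate
\begin{equation*}
\norm{g}_{L^\infty(\g)}\lesssim \Bigl(\sum_{k\geq 0}(1+k)^{2s}n_k\norm{\widehat{g}(k)}_{HS}^2\Bigr)^{\frac{1}{2}}
\end{equation*}
by writing $g=\sum_k(1+k)^{-s}p_k(f)$ with $\widehat{f}(k)=(1+k)^s\widehat{g}(k)$ and applying the Plancherel identity to compute $\norm{f}_{L^2(\g)}$. Hence $\mathrm{rd}(\widehat{\g})$ equals the infimum of those $s\geq 0$ for which (\ref{eq-1}) holds, and by the equivalence just established this infimum is $\tfrac{3}{2}$. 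The only place where subtlety enters is the input $\beta=1$ for $O_2^+$ and $S_4^+$, but since $n_k=k+1$ in these two cases this bound is elementary.
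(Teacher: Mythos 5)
Your proof is correct and takes essentially the same route as the paper: sufficiency from Proposition~\ref{prop-rd} with $\beta=1$, necessity from Theorem~\ref{thm-rd} applied to $w(k)=s\log(1+k)$, and the value of $\mathrm{rd}(\widehat{\g})$ via the substitution $\widehat{f}(k)=(1+k)^{s}\widehat{g}(k)$ together with Plancherel. The only cosmetic slip is that for $S_4^+$ one has $n_k=2k+1$ rather than $k+1$, but since $n_k\approx 1+k$ your elementary bound still gives $r_k\lesssim 1+k$, so the argument is unaffected.
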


\begin{proof}
It is sufficient to see the only if part. By Theorem \ref{thm-rd}, the given assumption implies $\displaystyle \sum_{k\geq 0} \frac{1}{(1+k)^{2s-2}}<\infty$, so that $s>\displaystyle \frac{3}{2}$.
\end{proof}

\section{Sharp Sobolev embedding properties for $\widehat{\mathbb{F}_N},O_N^+,S_N^+$}\label{sec-Sobolev}

Throughout this section, we will present sharp Sobolev embedding properties for duals of free groups and free quantum groups $O_N^+,S_N^+$. In each case, the standard noncommutative semigroups $T_t^F,T_t^O,T_t^S$ introduced in Example \ref{ex-1} will replace the role of Poisson or heat semigroups in each case.

We begin with the following computational lemma.

\begin{lemma}\label{lem1}
\begin{enumerate}
\item For any $t>0$, 
\begin{equation}\label{eq-2}
\sum_{k\geq 0}\frac{(1+k)^2}{e^{2t(1+k)}} =e^{-2t}(1-e^{-2t})^{-3}(1+e^{-2t}).
\end{equation}
\item If $a_k\sim k$, then $\displaystyle \sup_{0<t<\infty}\left \{ t^s \sum_{k\geq 0}\frac{(1+k)^2}{e^{2t(1+a_k)}} \right\}<\infty$ holds if and only if $s\geq 3$.
\end{enumerate}
\end{lemma}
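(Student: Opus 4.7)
The plan for part (1) is a direct generating-function computation. Setting $x = e^{-2t} \in (0,1)$, the series in question becomes $\sum_{n\geq 1} n^{2} x^{n}$, which is a classical identity obtained by twice differentiating the geometric series $\sum_{n\geq 0}x^{n} = (1-x)^{-1}$: one finds
\begin{equation*}
\sum_{n\geq 1} n^{2} x^{n} = \frac{x(1+x)}{(1-x)^{3}}.
\end{equation*}
Substituting back $x = e^{-2t}$ yields the claimed closed form.

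For part (2), my plan is to reduce the general series with $a_{k}$ to the closed form from (1) via a comparison argument. The hypothesis $a_{k}\sim k$ means that there exist constants $c_{1}, c_{2} > 0$ (and possibly a translation absorbed into the constants) with $c_{1}(1+k) \leq 1 + a_{k} \leq c_{2}(1+k)$ for all $k \geq 0$. Hence, for every $t > 0$,
\begin{equation*}
\sum_{k \geq 0} \frac{(1+k)^{2}}{e^{2t c_{2}(1+k)}} \leq \sum_{k \geq 0} \frac{(1+k)^{2}}{e^{2t(1+a_{k})}} \leq \sum_{k \geq 0} \frac{(1+k)^{2}}{e^{2t c_{1}(1+k)}},
\end{equation*}
and both outer quantities are given explicitly by part (1) evaluated at $c_{1} t$ or $c_{2} t$. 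So the asymptotic behavior of the middle quantity is determined by that of the function $F(t) := e^{-2t}(1-e^{-2t})^{-3}(1+e^{-2t})$.

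The remaining step, which I expect to be the only real content, is analyzing $F$. As $t \to \infty$, $F(t)$ decays exponentially, so $t^{s} F(t) \to 0$ for every $s \in \mathbb{R}$; consequently, no obstruction to the supremum being finite arises at infinity. The decisive regime is $t \to 0^{+}$: a Taylor expansion gives $1 - e^{-2t} = 2t + O(t^{2})$ and $e^{-2t}(1+e^{-2t}) = 2 + O(t)$, so
\begin{equation*}
F(t) = \frac{1}{4 t^{3}} \bigl(1 + O(t)\bigr) \quad \text{as } t \to 0^{+}.
\end{equation*}
Therefore $t^{s} F(t)$ is bounded on $(0,1]$ if and only if $s \geq 3$, and the same dichotomy transfers to the middle series by the two-sided comparison above. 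Combining this with the behavior at infinity establishes the claimed equivalence. The only potential subtlety is that the hypothesis $a_{k} \sim k$ must be strong enough to give a two-sided linear bound on $1+a_{k}$ for all $k$; if $a_{k} \sim k$ is understood only asymptotically, one splits the sum into a finite initial part (trivially bounded) and a tail on which the two-sided linear comparison holds, and the argument proceeds as above.
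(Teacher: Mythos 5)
Your proposal is correct and follows essentially the same route as the paper: the two-sided comparison $f(c_2 t)\leq g(t)\leq f(c_1 t)$ reducing everything to the explicit function $F$, followed by the observations that $t^3F(t)\to 1/4$ as $t\to 0^+$ and $t^xF(t)\to 0$ as $t\to\infty$. You additionally supply the standard generating-function derivation of part (1), which the paper leaves implicit, but this does not change the substance of the argument.
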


\begin{proof}
(2) Set $f(t)=\displaystyle \sum_{k\geq 0}\frac{(1+k)^2}{e^{2t(1+k)}}$ and $\displaystyle g(t)=\sum_{k\geq 0}\frac{(1+k)^2}{e^{2t(1+a_k)}}$. Since there exist $D_1,D_2>0$ such that $D_1\cdot (1+k)\leq 1+a_k \leq D_2\cdot (1+k)$, we have
\begin{equation}
f(D_2 t) \leq   g(t) \leq f(D_1 t).
\end{equation}
Thus, it is enough to show that $\displaystyle \sup_{0<t<\infty}\left \{t^s f(t)\right\}<\infty$ if and only if $s \geq 3$. Due to the explicit form (\ref{eq-2}) of $f(t)$, we have $\displaystyle \lim_{t\rightarrow 0^+} t^3 f(t)=\frac{1}{4}$ and $\displaystyle \lim_{t\rightarrow \infty} t^x f(t)=0$ for any $x\geq 0$, which give us the conclusion.
\end{proof}

Now, we are ready to compute the optimal order of the ultracontractivity of $S_t=(e^{-t}T_t)_{t>0}$.

\begin{corollary}\label{cor-ultra-sharp}
\begin{enumerate}
\item Let $\g=\widehat{\mathbb{F}_N}$ with $N\geq 2$ and $T_t=T_t^F$. Then there exists a universal constant $K>0$ such that
\begin{equation}
\norm{S_t(\lambda(f))}_{VN(\mathbb{F}_N)}\leq \frac{K \norm{f}_{\ell^2(\mathbb{F}_N)}}{t^{\frac{s}{2}}}~for~all~f\in \ell^2(\mathbb{F}_N)~and~t>0
\end{equation}
if and only if $s\geq 3$.
\item Let $N\geq 3$, $\g=O_N^+$ (resp. $S_{N+2}^+$) and $T_t=T_t^O$ (resp. $T_t^S$). Then there exists a universal constant $K>0$ such that 
\begin{equation}\label{ineq3}
\norm{S_t(f)}_{L^{\infty}(\g)}\leq \frac{K\norm{f}_{L^2(\g)}}{t^{\frac{s}{2}}}~for~all~f\in L^2(\g)~and~t>0
\end{equation}
if and only if $s \geq 3$.
\end{enumerate}
\end{corollary}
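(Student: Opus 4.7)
The plan is to treat the sufficient and necessary directions uniformly across both parts (1) and (2), with the radial length function $l$ on irreducibles instantiated as $l(g)=|g|$ on $\mathbb{F}_N$ in part (1), and as $l(k)=c_k$ on $O_N^+$ or $S_{N+2}^+$ (see Example \ref{ex-1}) in part (2). In every case $l\sim |\cdot|$, and the discrete dual $\widehat{\g}$ has the rapid decay property with $r_k\lesssim 1+k$: $\mathbb{F}_N$ with $N\geq 2$ is non-elementary hyperbolic, and for $O_N^+, S_{N+2}^+$ with $N\geq 3$ this was recalled in Section \ref{sec-Pre}. So in all three situations, rapid decay holds with $\beta=1$.

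For the sufficient direction ($s\geq 3 \Rightarrow$ ultracontractivity), I would observe that $S_t=e^{-t}T_t$ acts as the radial Fourier multiplier with symbol $e^{-t(1+l(|\alpha|))}$. Setting $w_t(k)=t(1+l(k))$ and invoking Proposition \ref{prop-rd} with this weight yields
\[
\norm{S_t(f)}_{L^\infty(\g)} \leq \left(\sum_{k\geq 0}\frac{(1+k)^2}{e^{2t(1+l(k))}}\right)^{1/2}\norm{f}_{L^2(\g)}.
\]
Since $l(k)\sim k$, Lemma \ref{lem1}(2) furnishes $\sup_{t>0} t^3 \sum_k (1+k)^2 e^{-2t(1+l(k))} <\infty$, which immediately yields the ultracontractive bound with $s=3$, and hence with every $s\geq 3$.

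For the necessary direction, if $\norm{S_t(f)}_{L^\infty(\g)} \leq K t^{-s/2}\norm{f}_{L^2(\g)}$ holds, this is by construction the hypothesis of Theorem \ref{thm-rd} applied to the weight $w_t$. The three cases covered by the corollary all fit the three bullets of that theorem: $\widehat{\mathbb{F}_N}$ with $N\geq 2$ under ``duals of non-elementary hyperbolic groups''; $O_N^+$ with $N\geq 3\geq 2$ directly; and $S_{N+2}^+$ with $N\geq 3$ is $S_M^+$ with $M\geq 5\geq 4$. Theorem \ref{thm-rd} then supplies a universal constant $K'$ such that
\[
t^s \sum_{k\geq 0}\frac{(1+k)^2}{e^{2t(1+l(k))}} \leq K' K^2 \quad\text{for all }t>0,
\]
and Lemma \ref{lem1}(2), applied with $a_k=l(k)\sim k$, converts this uniform bound into $s\geq 3$.

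\emph{The main point} is not in the estimates, which are packaged into Proposition \ref{prop-rd}, Theorem \ref{thm-rd}, and Lemma \ref{lem1}, but rather in correctly identifying the inputs: the rapid decay exponent $\beta=1$ for each discrete dual, the asymptotics $l(k)\sim k$ of each infinitesimal generator, and the membership of each quantum group in the hypotheses of Theorem \ref{thm-rd}. Once this bookkeeping is done, the corollary reduces to a direct invocation of the three earlier results and presents no additional analytic difficulty.
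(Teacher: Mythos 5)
Your proposal is correct and follows essentially the same route as the paper: sufficiency via Proposition \ref{prop-rd} with the weight $w_t(k)=t(1+l(k))$ together with Lemma \ref{lem1}(2), and necessity via Theorem \ref{thm-rd} followed by Lemma \ref{lem1}(2). The only cosmetic point is that "the bound with $s=3$ and hence with every $s\geq 3$" is better phrased as applying Lemma \ref{lem1}(2) directly for each $s\geq 3$ (which it covers), since the ultracontractive estimate for $s=3$ does not formally imply the one for larger $s$ at large $t$ without an extra semigroup argument.
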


\begin{proof}
Recall that, in the case of (2), $T_t:u_{i,j}^k\mapsto e^{-tc_k}u^k_{i,j}$ with $c_k\sim k$.

Now, in all cases, if we suppose $s \geq 3$, then
\[C_w=\sum_{k\geq 0}\frac{(1+k)^2}{e^{2t(1+k)}}~(resp. \sum_{k\geq 0}\frac{(1+k)^2}{e^{2t(1+c_k)}})\lesssim \frac{1}{t^s}\]
by Lemma \ref{lem1} (2), so Proposition \ref{prop-rd} is applicable.

Conversely, from the assumed inequalities, we obtain
\[\sum_{k\geq 0} \frac{(1+k)^2}{e^{2t(1+k)}} ~(resp. \sum_{k\geq 0}\frac{(1+k)^2}{e^{2t(1+c_k)}}) \lesssim \frac{1}{t^s}\]
by Theorem \ref{thm-rd}, so Lemma \ref{lem1} (2) tells us $s\geq 3$.
\end{proof}

Finally, since we have sharp ultracontractivity properties of $S_t=(e^{-t}T_t)_{t>0}$ (Corollary \ref{cor-ultra-sharp}), we reach the following sharp Sobolev embedding properties for $\widehat{\mathbb{F}_N}$, $O_N^+$ and $S_N^+$ by Theorem \ref{thm-Xiao}.

\begin{example} \label{Sobolev-sharp}
\begin{enumerate}
\item Let $N\geq 2$ and $1<p<q<\infty$. Then
\begin{equation}
\norm{\sum_{g\in \mathbb{F}_N }\frac{f(g)}{(1+\left |g\right |)^{3(\frac{1}{p}-\frac{1}{q})}}\lambda_g}_{L^q(\widehat{\mathbb{F}_N})}\lesssim \norm{\lambda(f)}_{L^p(\widehat{\mathbb{F}_N})}
\end{equation}
for all $\lambda(f)\sim \displaystyle \sum_{g\in \mathbb{F}_N }f(g)\lambda_g\in L^p(\widehat{\mathbb{F}_N })$.
\item Let $\g=O_N^+$ or $S_{N+2}^+$ with $N\geq 3$ and $1<p\leq 2$. Then
\begin{equation}
\left ( \sum_{k\geq 0}\frac{n_k}{(1+k)^{3(\frac{2}{p}-1)}}\norm{\widehat{f}(k)}_{HS}^2 \right )^{\frac{1}{2}}\lesssim \norm{f}_{L^p(\g)}~for~all~f\in L^p(\g).
\end{equation}
\end{enumerate}
\end{example}

The above results are applicable to show the following:
\begin{itemize}
\item Hardy-Littlewood inequalities on $\widehat{\mathbb{F}_N }$ \cite[Theorem 4.4]{Yo18a} are sharp;
\item Theorem \ref{thm-rapid} for $\widehat{\mathbb{F}_N }$ and $O_{N+1}^+,S_{N+3}^+$ with $N\geq 2$ are sharp.
\end{itemize}

\begin{corollary}\label{cor-application}
\begin{enumerate}
\item Let $N\geq 2$ and $1<p\leq 2$. Then 
\begin{equation}
\left (\sum_{k\geq 0} \frac{1}{(1+k)^{s}} \left (\sum_{g\in \mathbb{F}_N  :|g|=k}|f(g)|^2 \right )^{\frac{p}{2}} \right )^{\frac{1}{p}}\lesssim \norm{\lambda(f)}_{L^p(\widehat{\mathbb{F}_N })}
\end{equation}
for all $\lambda(f)\sim \displaystyle \sum_{g\in \mathbb{F}_N  }f(g)\lambda_g\in L^p(\widehat{\mathbb{F}_N })$ if and only if $s\geq 4-2p$.
\item Let $1<p\leq 2$ and $\g$ be $\widehat{\mathbb{F}_N},O_{N+1}^+$ or $S_{N+3}^+$ with $N\geq 2$. Then
\begin{equation}
\left ( \sum_{k\geq 0} \frac{1}{(1+k)^s} \left ( \sum_{\alpha\in S_k} n_{\alpha}\norm{\widehat{f}(\alpha)}_{HS}^2 \right )^{\frac{p'}{2}} \right )^{\frac{1}{p'}} \lesssim \norm{f}_{L^p(\g)}
\end{equation}
for all $f\in L^p(\g)$ if and only if $s\geq p'-2$.
\end{enumerate}
\end{corollary}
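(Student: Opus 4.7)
The strategy is to derive both \emph{only if} directions by interpolating the endpoints~(1) and~(2) against each other, landing on the Sobolev embedding of Example~\ref{Sobolev-sharp}, whose sharpness is already known via Corollary~\ref{cor-ultra-sharp}. The \emph{if} directions are immediate: for~(1), $s=4-2p$ is the Hardy-Littlewood inequality \cite[Theorem~4.4]{Yo18a}; for~(2), $s=p'-2$ is Theorem~\ref{thm-rapid} applied with $\beta=1$, which holds because $\widehat{\mathbb{F}_N}$, $O_{N+1}^+$, and $S_{N+3}^+$ all satisfy $r_k\lesssim 1+k$ for $N\geq 2$. Monotonicity of $(1+k)^{-s}$ in $s$ covers values above these thresholds.

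The two endpoint maps I will interpolate are, by \cite[Corollary~3.9]{Yo18a} with $\beta=1$ and by Theorem~\ref{thm-rapid} respectively,
\[
\mathcal{F}:L^p(\g)\to \ell^p(\{H_k\}_{k\geq 0},\mu_0),\qquad \mu_0(k)=(1+k)^{-(4-2p)},
\]
\[
\mathcal{F}:L^p(\g)\to \ell^{p'}(\{H_k\}_{k\geq 0},\mu_1),\qquad \mu_1(k)=(1+k)^{-(p'-2)}.
\]
Applying~(\ref{CI1}) at $\theta=1/2$ gives $1/q=\tfrac{1}{2}(1/p+1/p')=1/2$ (hence $q=2$) and the interpolated weight $\mu(k)=\mu_0(k)^{1/p}\mu_1(k)^{1/p'}$. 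The crucial arithmetic identity
\[
\frac{4-2p}{p}+\frac{p'-2}{p'}=\Bigl(\frac{4}{p}-2\Bigr)+\Bigl(\frac{2}{p}-1\Bigr)=\frac{6}{p}-3=3\Bigl(\frac{2}{p}-1\Bigr)
\]
shows that with these sharp weights the interpolated bound reproduces precisely the sharp Sobolev embedding of Example~\ref{Sobolev-sharp}.

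Now suppose~(1) holds with some exponent $s$. Replacing $\mu_0$ by $(1+k)^{-s}$ and running the same $\theta=1/2$ interpolation produces a Sobolev-type bound with weight exponent $s/p+(p'-2)/p'=s/p+2/p-1$. The sharpness of Example~\ref{Sobolev-sharp}(1) then forces $s/p+2/p-1\geq 6/p-3$, i.e.\ $s\geq 4-2p$. The argument for~(2) is symmetric: keep $\mu_0(k)=(1+k)^{-(4-2p)}$, replace $\mu_1$ by $(1+k)^{-s}$, and interpolate to obtain a Sobolev-type bound with weight exponent $(4-2p)/p+s/p'=4/p-2+s/p'$; sharpness of Example~\ref{Sobolev-sharp} (part~(1) for $\widehat{\mathbb{F}_N}$, part~(2) for $O_{N+1}^+$ and $S_{N+3}^+$) gives $s/p'\geq 2/p-1=(p'-2)/p'$, hence $s\geq p'-2$. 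There is no significant obstacle beyond carefully tracking the weights in~(\ref{CI1}); the entire sharpness input is inherited from the ultracontractivity threshold $d\geq 3$ established in Corollary~\ref{cor-ultra-sharp}.
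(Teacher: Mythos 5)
Your proposal is correct and follows essentially the same route as the paper: the \emph{if} directions come from the known Hardy--Littlewood and sharpened Hausdorff--Young inequalities, and the \emph{only if} directions are obtained by interpolating the assumed bound against the complementary sharp endpoint via (\ref{CI1}) at $\theta=\tfrac12$, landing on an $\ell^2$-valued Sobolev bound whose exponent is then constrained by the sharpness of Example \ref{Sobolev-sharp} (ultimately Corollary \ref{cor-ultra-sharp}). The weight bookkeeping ($\mu_0^{1/p}\mu_1^{1/p'}$ and the resulting inequalities $s\geq 4-2p$, $s\geq p'-2$) matches the paper's computation exactly.
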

\begin{proof}
In both cases, it is sufficient to show the only if parts.
\begin{enumerate}
\item From the assumption and Theorem \ref{thm-rapid}, we know that the Fourier transform $\F$ satisfies 
\[\norm{\F}_{L^p(\widehat{\mathbb{F}_N  })\rightarrow \ell^p(\left \{H_k\right\}_{k\geq 0},\mu_0)},~\norm{\F}_{L^p(\widehat{\mathbb{F}_N })\rightarrow \ell^{p'}(\left \{H_k\right\}_{k\geq 0},\mu_1)}<\infty \] 
where $\mu_0(k)=(1+k)^{-s}$ and $\mu_1(k)=(1+k)^{2-p'}$. By (\ref{CI1}), we have 
\[\norm{\F}_{L^p(\widehat{\mathbb{F}_N })\rightarrow \ell^{2}(\left \{H_k\right\}_{k\geq 0},\mu)}<\infty~\mathrm{with}~\mu(k)=(1+k)^{-\frac{s}{p}+\frac{2}{p'}-1}\] 
and Example \ref{Sobolev-sharp} (1) tells us that $\displaystyle \frac{s}{p}-\frac{2}{p'}+1\geq 3(\frac{2}{p}-1)$ ($\Leftrightarrow s\geq 4-2p$).
\item \cite[Corollary 3.9]{Yo18a} and the given assumption tell us that 
\[\norm{\F}_{L^p(\g)\rightarrow \ell^p(\left \{H_k\right\}_{k\geq 0},\mu_0)},~\norm{\F}_{L^p(\g)\rightarrow \ell^{p'}(\left \{H_k\right\}_{k\geq 0},\mu_1)}<\infty \] 
where $\mu_0(k)=(1+k)^{2p-4}$ and $\mu_1(k)=(1+k)^{-s}$. By (\ref{CI1}), we have 
\[\norm{\F}_{L^p(\g)\rightarrow \ell^{2}(\left \{H_k\right\}_{k\geq 0},\mu)}<\infty~\mathrm{with}~\mu(k)=(1+k)^{2-\frac{4}{p}-\frac{s}{p'}}\] and Example \ref{Sobolev-sharp} tells us that $\displaystyle -2+\frac{4}{p}+\frac{s}{p'}\geq 3(\frac{2}{p}-1)$ ($\Leftrightarrow s\geq p'-2$).
\end{enumerate}

\begin{remark}
The proof of Corollary \ref{cor-application} (1) is available for $O_N^+$ and $S_{N+2}^+$ with $N\geq 3$ by similar arguments. This partially reconfirms sharpness of \cite[Theorem 4.5 (2)]{Yo18a}.
\end{remark}
\end{proof}
\bibliographystyle{alpha}
\bibliography{Sobolev}

\newcommand{\etalchar}[1]{$^{#1}$}
\def\cprime{$'$}
\begin{thebibliography}{BPTV18}

\bibitem[AMR18]{AkMaRu18}
Rauan Akylzhanov, Shahn Majid, and Michael Ruzhansky.
\newblock Smooth dense subalgebras and {F}ourier multipliers on compact quantum
  groups.
\newblock {\em Comm. Math. Phys.}, 362(3):761--799, 2018.

\bibitem[ANR15]{AkNuRu15}
Rauan Akylzhanov, Erlan Nursultanov, and Michael Ruzhansky.
\newblock Hardy-{L}ittlewood, {H}ausdorff-{Y}oung-{P}aley inequalities, and
  ${L^p-L^q}$ {F}ourier multipliers on compact homogeneous manifolds.
\newblock {\em arXiv preprint arXiv:1504.07043}, 2015.

\bibitem[ANR16]{AkNuRu16}
Rauan Akylzhanov, Erlan Nursultanov, and Michael Ruzhansky.
\newblock Hardy-{L}ittlewood-{P}aley inequalities and {F}ourier multipliers on
  {$\rm{SU}(2)$}.
\newblock {\em Studia Math.}, 234(1):1--29, 2016.

\bibitem[AR15]{AkRu15}
Rauan Akylzhanov and Michael Ruzhansky.
\newblock $ {L}^{p} $-$ {L}^{q} $ multipliers on locally compact groups.
\newblock {\em arXiv preprint arXiv:1510.06321v3}, 2015.

\bibitem[BCV17]{Br17b}
Michael Brannan, Beno\^{i}t Collins, and Roland Vergnioux.
\newblock The {C}onnes embedding property for quantum group von {N}eumann
  algebras.
\newblock {\em Trans. Amer. Math. Soc.}, 369(6):3799--3819, 2017.

\bibitem[BLD13]{BrLe13}
Emmanuel Breuillard and Enrico Le~Donne.
\newblock On the rate of convergence to the asymptotic cone for nilpotent
  groups and sub{F}insler geometry.
\newblock {\em Proc. Natl. Acad. Sci. USA}, 110(48):19220--19226, 2013.

\bibitem[BPTV18]{BrPeTaVa18}
Tommaso Bruno, Marco~M Peloso, Anita Tabacco, and Maria Vallarino.
\newblock Sobolev spaces on {L}ie groups: embedding theorems and algebra
  properties.
\newblock {\em arXiv preprint arXiv:1804.10154}, 2018.

\bibitem[Bra12]{Br12}
Michael Brannan.
\newblock Approximation properties for free orthogonal and free unitary quantum
  groups.
\newblock {\em J. Reine Angew. Math.}, 672:223--251, 2012.

\bibitem[BV09]{BaVe09}
Teodor Banica and Roland Vergnioux.
\newblock Growth estimates for discrete quantum groups.
\newblock {\em Infin. Dimens. Anal. Quantum Probab. Relat. Top.},
  12(2):321--340, 2009.

\bibitem[BV18]{BrVe18}
Michael Brannan and Roland Vergnioux.
\newblock Orthogonal free quantum group factors are strongly 1-bounded.
\newblock {\em Adv. Math.}, 329:133--156, 2018.

\bibitem[CGM93]{CoGiMe93}
Michael Cowling, Saverio Giulini, and Stefano Meda.
\newblock {$L^p$}-{$L^q$} estimates for functions of the {L}aplace-{B}eltrami
  operator on noncompact symmetric spaces. {I}.
\newblock {\em Duke Math. J.}, 72(1):109--150, 1993.

\bibitem[dlH88]{Ha88}
Pierre de~la Harpe.
\newblock Groupes hyperboliques, alg\`ebres d'op\'erateurs et un th\'eor\`eme
  de {J}olissaint.
\newblock {\em C. R. Acad. Sci. Paris S\'er. I Math.}, 307(14):771--774, 1988.

\bibitem[FHL{\etalchar{+}}17]{FrHoLeUlZh17}
Uwe Franz, Guixiang Hong, Fran\c{c}ois Lemeux, Micha\"el Ulrich, and Haonan
  Zhang.
\newblock Hypercontractivity of heat semigroups on free quantum groups.
\newblock {\em J. Operator Theory}, 77(1):61--76, 2017.

\bibitem[Fol75]{Fo75}
G.~B. Folland.
\newblock Subelliptic estimates and function spaces on nilpotent {L}ie groups.
\newblock {\em Ark. Mat.}, 13(2):161--207, 1975.

\bibitem[FR17]{FiRu17}
Veronique Fischer and Michael Ruzhansky.
\newblock Sobolev spaces on graded {L}ie groups.
\newblock {\em Ann. Inst. Fourier (Grenoble)}, 67(4):1671--1723, 2017.

\bibitem[FV16]{FrVe16}
Amaury Freslon and Roland Vergnioux.
\newblock The radial {MASA} in free orthogonal quantum groups.
\newblock {\em J. Funct. Anal.}, 271(10):2776--2807, 2016.

\bibitem[GPJP17]{GoJuPa17}
Adri\'{a}n Gonz\'{a}lez-P\'{e}rez, Marius Junge, and Javier Parcet.
\newblock Smooth {F}ourier multipliers in group algebras via {S}obolev
  dimension.
\newblock {\em Ann. Sci. \'{E}c. Norm. Sup\'{e}r. (4)}, 50(4):879--925, 2017.

\bibitem[GT80]{GiTr80}
Saverio Giulini and Giancarlo Travaglini.
\newblock {$L^{p}$}-estimates for matrix coefficients of irreducible
  representations of compact groups.
\newblock {\em Proc. Amer. Math. Soc.}, 80(3):448--450, 1980.

\bibitem[Haa79]{Ha78}
Uffe Haagerup.
\newblock An example of a nonnuclear {$C\sp{\ast} $}-algebra, which has the
  metric approximation property.
\newblock {\em Invent. Math.}, 50(3):279--293, 1978/79.

\bibitem[Iso15]{Is15a}
Yusuke Isono.
\newblock Examples of factors which have no {C}artan subalgebras.
\newblock {\em Trans. Amer. Math. Soc.}, 367(11):7917--7937, 2015.

\bibitem[Iso17]{Is17}
Yusuke Isono.
\newblock Some prime factorization results for free quantum group factors.
\newblock {\em J. Reine Angew. Math.}, 722:215--250, 2017.

\bibitem[JM12]{JuMe12}
M.~Junge and T.~Mei.
\newblock B{MO} spaces associated with semigroups of operators.
\newblock {\em Math. Ann.}, 352(3):691--743, 2012.

\bibitem[JMP14a]{JuMePa14}
Marius Junge, Tao Mei, and Javier Parcet.
\newblock An invitation to harmonic analysis associated with semigroups of
  operators.
\newblock In {\em Harmonic analysis and partial differential equations}, volume
  612 of {\em Contemp. Math.}, pages 107--122. Amer. Math. Soc., Providence,
  RI, 2014.

\bibitem[JMP14b]{JuMePa14b}
Marius Junge, Tao Mei, and Javier Parcet.
\newblock Smooth {F}ourier multipliers on group von {N}eumann algebras.
\newblock {\em Geom. Funct. Anal.}, 24(6):1913--1980, 2014.

\bibitem[JPPP17]{JuPaPaPe17}
Marius Junge, Carlos Palazuelos, Javier Parcet, and Mathilde Perrin.
\newblock Hypercontractivity in group von {N}eumann algebras.
\newblock {\em Mem. Amer. Math. Soc.}, 249(1183):xii + 83, 2017.

\bibitem[JW17]{JiWa17}
Yong Jiao and Maofa Wang.
\newblock Noncommutative harmonic analysis on semigroups.
\newblock {\em Indiana Univ. Math. J.}, 66(2):401--417, 2017.

\bibitem[KV00]{KuVa00}
Johan Kustermans and Stefaan Vaes.
\newblock Locally compact quantum groups.
\newblock {\em Ann. Sci. \'Ecole Norm. Sup. (4)}, 33(6):837--934, 2000.

\bibitem[KV03]{KuVa03}
Johan Kustermans and Stefaan Vaes.
\newblock Locally compact quantum groups in the von {N}eumann algebraic
  setting.
\newblock {\em Math. Scand.}, 92(1):68--92, 2003.

\bibitem[MdlS17]{MeDe17}
Tao Mei and Mikael de~la Salle.
\newblock Complete boundedness of heat semigroups on the von {N}eumann algebra
  of hyperbolic groups.
\newblock {\em Trans. Amer. Math. Soc.}, 369(8):5601--5622, 2017.

\bibitem[MR17]{MeRi17}
Tao Mei and \'{E}ric Ricard.
\newblock Free {H}ilbert transforms.
\newblock {\em Duke Math. J.}, 166(11):2153--2182, 2017.

\bibitem[Nic10]{Ni10}
Bogdan Nica.
\newblock On the degree of rapid decay.
\newblock {\em Proc. Amer. Math. Soc.}, 138(7):2341--2347, 2010.

\bibitem[Nic17]{Ni17}
Bogdan Nica.
\newblock On operator norms for hyperbolic groups.
\newblock {\em J. Topol. Anal.}, 9(2):291--296, 2017.

\bibitem[Pis03]{Pi03}
Gilles Pisier.
\newblock {\em Introduction to operator space theory}, volume 294 of {\em
  London Mathematical Society Lecture Note Series}.
\newblock Cambridge University Press, Cambridge, 2003.

\bibitem[PX03]{PiXu03}
Gilles Pisier and Quanhua Xu.
\newblock Non-commutative {$L^p$}-spaces.
\newblock In {\em Handbook of the geometry of {B}anach spaces, {V}ol. 2}, pages
  1459--1517. North-Holland, Amsterdam, 2003.

\bibitem[RX11]{RiXu11}
\'{E}ric Ricard and Quanhua Xu.
\newblock Complex interpolation of weighted noncommutative {$L_p$}-spaces.
\newblock {\em Houston J. Math.}, 37(4):1165--1179, 2011.

\bibitem[Ste70]{St70}
Elias~M. Stein.
\newblock {\em Topics in harmonic analysis related to the {L}ittlewood-{P}aley
  theory}.
\newblock Annals of Mathematics Studies, No. 63. Princeton University Press,
  Princeton, N.J.; University of Tokyo Press, Tokyo, 1970.

\bibitem[Tim08]{Ti08}
Thomas Timmermann.
\newblock {\em An invitation to quantum groups and duality}.
\newblock EMS Textbooks in Mathematics. European Mathematical Society (EMS),
  Z\"urich, 2008.
\newblock From Hopf algebras to multiplicative unitaries and beyond.

\bibitem[Var88]{Va88}
N.~Th. Varopoulos.
\newblock Analysis on {L}ie groups.
\newblock {\em J. Funct. Anal.}, 76(2):346--410, 1988.

\bibitem[VDW96]{VaWa96}
Alfons Van~Daele and Shuzhou Wang.
\newblock Universal quantum groups.
\newblock {\em Internat. J. Math.}, 7(2):255--263, 1996.

\bibitem[Ver07]{Ve07}
Roland Vergnioux.
\newblock The property of rapid decay for discrete quantum groups.
\newblock {\em J. Operator Theory}, 57(2):303--324, 2007.

\bibitem[Voi11]{Vo11}
Christian Voigt.
\newblock The {B}aum-{C}onnes conjecture for free orthogonal quantum groups.
\newblock {\em Adv. Math.}, 227(5):1873--1913, 2011.

\bibitem[VV13]{VeVo13}
Roland Vergnioux and Christian Voigt.
\newblock The {$K$}-theory of free quantum groups.
\newblock {\em Math. Ann.}, 357(1):355--400, 2013.

\bibitem[Wal73]{Wa73}
Nolan~R. Wallach.
\newblock {\em Harmonic analysis on homogeneous spaces}.
\newblock Marcel Dekker, Inc., New York, 1973.
\newblock Pure and Applied Mathematics, No. 19.

\bibitem[Wan95]{Wa95}
Shuzhou Wang.
\newblock Free products of compact quantum groups.
\newblock {\em Comm. Math. Phys.}, 167(3):671--692, 1995.

\bibitem[Wan98]{Wa98}
Shuzhou Wang.
\newblock Quantum symmetry groups of finite spaces.
\newblock {\em Comm. Math. Phys.}, 195(1):195--211, 1998.

\bibitem[Wan17]{Wa17}
Simeng Wang.
\newblock Lacunary {F}ourier series for compact quantum groups.
\newblock {\em Comm. Math. Phys.}, 349(3):895--945, 2017.

\bibitem[Wor87a]{Wo87b}
Stanis\l aw~L. Woronowicz.
\newblock Compact matrix pseudogroups.
\newblock {\em Comm. Math. Phys.}, 111(4):613--665, 1987.

\bibitem[Wor87b]{Wo87a}
Stanis\l aw~L. Woronowicz.
\newblock Twisted {${\rm SU}(2)$} group. {A}n example of a noncommutative
  differential calculus.
\newblock {\em Publ. Res. Inst. Math. Sci.}, 23(1):117--181, 1987.

\bibitem[Xio17]{Xi17}
Xiao Xiong.
\newblock Noncommutative harmonic analysis on semigroup and ultracontractivity.
\newblock {\em Indiana Univ. Math. J.}, 66:1921--1947, 2017.

\bibitem[Xu96]{Xu96}
Quanhua Xu.
\newblock Interpolation of operator spaces.
\newblock {\em J. Funct. Anal.}, 139(2):500--539, 1996.

\bibitem[XXY18]{XiXuYi18}
Xiao Xiong, Quanhua Xu, and Zhi Yin.
\newblock Sobolev, {B}esov and {T}riebel-{L}izorkin spaces on quantum tori.
\newblock {\em Mem. Amer. Math. Soc.}, 252(1203):vi+118, 2018.

\bibitem[You18a]{Yo18b}
Sang-Gyun Youn.
\newblock Entropic uncertainty relations under localizations on discrete
  quantum groups.
\newblock {\em arXiv preprint arXiv:1804.01199}, 2018.

\bibitem[You18b]{Yo18a}
Sang-Gyun Youn.
\newblock Hardy-{L}ittlewood inequalities on compact quantum groups of {K}ac
  type.
\newblock {\em Anal. PDE}, 11(1):237--261, 2018.

\bibitem[You18c]{Yo18c}
Sang-Gyun Youn.
\newblock {\em Multipliers and the Similarity Property for Topological Quantum
  Groups}.
\newblock Graduate School of Seoul National University, 2018.
\newblock Thesis (Ph.D.)--Seoul National University.

\end{thebibliography}

\end{document}